\documentclass[12pt]{article}

\usepackage{amsmath, amssymb, amsthm}
\usepackage{mathrsfs}
\usepackage{enumitem}
\usepackage{hyperref}
\usepackage{geometry}
\usepackage{bm}
\usepackage{algorithm}
\usepackage{algpseudocode}
\usepackage{tikz}
\usetikzlibrary{arrows.meta, positioning}
\usepackage{stmaryrd}
\usepackage{booktabs}
\usepackage{graphicx}
\usepackage{color}
\usepackage{tikz}

\newcommand{\keywords}[1]{%
  \par\noindent\textbf{Keywords.} #1}

\newcommand{\msc}[1]{%
  \par\noindent\textbf{MSC (2020).} #1}

\usepackage[numbers]{natbib}

\newtheorem{theorem}{Theorem}[section]
\newtheorem{lemma}[theorem]{Lemma}

\newtheorem{definition}[theorem]{Definition}
\newtheorem{remark}[theorem]{Remark}
\newtheorem{example}[theorem]{Example}

\title{Exact Formulas for Coprime Representations of Even Integers Avoiding a Prime}

\author{Andrés M. Salazar \thanks{Universidad Javeriana Cali - Colombia, andresmsalazar@javerianacali.edu.co}}
\date{\today}

\begin{document}

\maketitle

\begin{abstract}
Fix a prime $p \ge 5$ and define
\[
g(2n,p)
= \#\bigl\{(h,k)\in\mathbb{Z}_{>0}^2 :
h+k=2n,\; h\le k,\;
\gcd(h,6p)=\gcd(k,6p)=1\bigr\}.
\]
We derive explicit closed-form expressions for $g(2n,p)$ in terms
of the canonical remainder operator $\delta_k(x)=x-k\lfloor x/k\rfloor$,
elementary step functions, and the minimal solutions of the
congruences $6x \equiv -1 \pmod{p}$ and $6x \equiv -5 \pmod{p}$.
The key ingredient is an explicit formula for the minimal solution
of $\delta_k(a_0 x)=b_0$ obtained via the Euclidean algorithm,
which determines the excluded residue classes directly.
The resulting formulas reveal that $g(2n,p)$ is piecewise affine
along arithmetic progressions of $n$, governed by residue classes
modulo $3$ and $p$. For fixed $p$, after precomputing two residue
parameters in $O(\log p)$ time, each evaluation of $g(2n,p)$
requires only $O(1)$ operations, compared to $O(n)$ for direct
enumeration. The formulas are validated computationally for all
$2n \le 10^5$ and primes $p \in \{5,7,11,13,17,19,23\}$, with
perfect agreement with brute-force enumeration.

\keywords{additive number theory; coprime representations;
exact counting formulas; remainder operator;
combinatorial number theory}

\msc{11A41, 11B75, 11N25, 11P99}
\end{abstract}

\section{Introduction}
\label{sec:intro}

The enumeration of representations of an even integer as a sum of
two positive integers subject to coprimality conditions is a
classical problem in additive number theory
\cite{HardyWright2008, Nathanson2000}.
While asymptotic estimates for related counting functions follow
from sieve methods and Möbius inversion
\cite{halberstam-richert, iwaniec-kowalski},
obtaining fully explicit exact formulas is a more delicate task,
as it requires precise control of residue constraints and ordering
conditions.

In this paper we study the counting function
\begin{equation}\label{eq:main}
g(2n,p)
= \#\bigl\{(h,k)\in\mathbb{Z}_{>0}^2 :
h+k=2n,\; h\le k,\;
\gcd(h,6p)=\gcd(k,6p)=1\bigr\},
\end{equation}
where $p \ge 5$ is a fixed prime. This function counts representations
of $2n$ as a sum $h+k$ of two positive integers with $h \le k$,
both avoiding the primes $2$, $3$, and $p$.

The function $g(2n,p)$ can be evaluated via inclusion--exclusion
over the prime divisors of $6p$, yielding a piecewise expression
periodic modulo $6p$, as follows from standard divisor-sum arguments.
However, such formulas typically involve alternating sums over divisor
sets and do not directly reveal the affine structure of the counting
function. Our main result, Theorem~\ref{thm:main}, provides an
alternative approach: explicit closed formulas expressed in terms of
the canonical remainder operator
\[
\delta_k(x) := x - k\left\lfloor\frac{x}{k}\right\rfloor,
\]
elementary step functions, and the minimal solutions $a(p)$ and $b(p)$
of the congruences
\[
6x \equiv -1 \pmod{p},
\qquad
6x \equiv -5 \pmod{p}.
\]
These residue parameters determine the excluded congruence classes
modulo $p$ in a direct and explicit manner.

A key technical component is the explicit resolution of affine residue
equations of the form $\delta_k(a_0 x + b) = c$, which reduce to
equations of the form $\delta_k(a_0 x) = b_0$ via additive compatibility.
This reduction is applied in Section~\ref{sec:main} to determine the
precise residue exclusions governing admissible coprime decompositions.

For fixed $p$, after precomputing $a(p)$ and $b(p)$ in $O(\log p)$
operations using the Euclidean algorithm, each evaluation of $g(2n,p)$
requires only $O(1)$ arithmetic operations. This represents a substantial
improvement over direct enumeration, which requires $O(n)$ operations.

The resulting formulas reveal that $g(2n,p)$ is piecewise affine along
arithmetic progressions of $n$, with structure determined by residue
classes modulo $3$ and $p$. This affine structure is not transparent
from inclusion--exclusion formulas or asymptotic estimates, and emerges
naturally from the residue-based formulation developed here.

Theorem~\ref{thm:main} is validated computationally against a brute-force
oracle for all even integers $2n \le 10^5$ and primes
$p \in \{5,7,11,13,17,19,23\}$, with perfect agreement observed in
all cases.

The paper is organized as follows.
Section~\ref{sec:preliminaries} fixes notation and collects standard
properties of the remainder operator $\delta_k$.
Section~\ref{sec:euclidean} derives an explicit formula for the minimal
solution of $\delta_k(a_0 x)=b_0$ via the Euclidean algorithm.
Section~\ref{sec:main} applies these tools to obtain
Theorem~\ref{thm:main}.
Section~\ref{sec:validation} presents the computational validation.

\section{Preliminaries}\label{sec:preliminaries}

For integers $a,b \in \mathbb{Z}$, we denote by $\gcd(a,b)$ their greatest 
common divisor, and say that $a$ and $b$ are \emph{coprime} if $\gcd(a,b)=1$. 
We write $a \mid b$ when $a$ divides $b$, and $a \nmid b$ otherwise. For 
integer intervals, we use the notation
\[
[a,b]_{\mathbb{Z}} := \{\, n \in \mathbb{Z} : a \le n \le b \,\}.
\]

\subsection{Remainder operator}

For $k \ge 2$, define the map
\[
\delta_k : \mathbb{Z} \to [0,k-1]_{\mathbb{Z}},
\qquad
\delta_k(a) := a - k \left\lfloor \frac{a}{k} \right\rfloor .
\]

Thus $\delta_k(a)$ is the canonical representative of the residue class of $a$ 
modulo $k$, and satisfies
\[
\delta_k(a) \in \{0,1,\dots,k-1\},
\qquad
\delta_k(a)=0 \iff k \mid a.
\]

The operator $\delta_k$ coincides with the canonical projection 
$\mathbb{Z} \to \mathbb{Z}/k\mathbb{Z}$ followed by the choice of the standard 
representative. In particular, for all $a,b,r,n \in \mathbb{Z}$, the following 
identities hold:
\begin{align}
\delta_k(a+kn) &= \delta_k(a), \label{eq:period}\\
\delta_k(a+b)  &= \delta_k\!\left(\delta_k(a)+\delta_k(b)\right),
                \label{eq:addcomp}\\
\delta_k(ra)   &= \delta_k\!\left(r\,\delta_k(a)\right).
                \label{eq:multcomp}
\end{align}

These identities express compatibility with addition and multiplication 
after projection to canonical representatives; they follow directly from 
the definition of $\delta_k$ and standard properties of the floor function 
(see, e.g.,~\cite{HardyWright2008}). For example,
\[
\delta_k(ab+c) = \delta_k\!\left(a\,\delta_k(b)+\delta_k(c)\right).
\]

We also record the idempotence property.

\begin{lemma}\label{lem:idempotent}
For all $a \in \mathbb{Z}$ and $k \ge 2$,
\[
\delta_k(\delta_k(a))=\delta_k(a).
\]
\end{lemma}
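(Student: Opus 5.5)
The plan is to show that applying $\delta_k$ to a number already in $[0,k-1]_{\mathbb{Z}}$ returns it unchanged. Set $r := \delta_k(a)$. By the definition of $\delta_k$, $r \in \{0,1,\dots,k-1\}$, so that $0 \le r/k < 1$. It then follows that $\lfloor r/k \rfloor = 0$.

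Substituting this into the definition gives
\[
\delta_k(r) = r - k\left\lfloor \frac{r}{k} \right\rfloor = r - 0 = r = \delta_k(a),
\]
which is the claim.

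Alternatively, one could apply \eqref{eq:period} with $n = -\lfloor a/k \rfloor$, which gives $\delta_k(\delta_k(a)) = \delta_k(a - k\lfloor a/k\rfloor + k\lfloor a/k\rfloor) = \delta_k(a)$. I prefer the direct computation above, since it does not depend on the identities that the paper only cites.

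There is no real obstacle here. The only point that needs care is the bound $0 \le \delta_k(a) \le k-1$, since the floor computation rests on it. This bound follows from $\lfloor x \rfloor \le x < \lfloor x \rfloor + 1$ applied to $x = a/k$, after multiplying through by $k>0$.
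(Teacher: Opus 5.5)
Your main argument is correct and is essentially the paper's proof: since $\delta_k(a)\in[0,k-1]_{\mathbb{Z}}$, one has $\lfloor \delta_k(a)/k\rfloor=0$, so the definition returns $\delta_k(a)$ unchanged. In your alternative, the displayed middle expression mixes two ways of applying \eqref{eq:period}: either use $x=a$ with $n=-\lfloor a/k\rfloor$, which gives $\delta_k(a-k\lfloor a/k\rfloor)=\delta_k(a)$ directly, or use $x=\delta_k(a)$ with $n=+\lfloor a/k\rfloor$; this slip does not affect your main proof.
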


\begin{proof}
Since $\delta_k(a) \in [0,k-1]_{\mathbb{Z}}$, we have
\[
\left\lfloor \frac{\delta_k(a)}{k} \right\rfloor = 0,
\]
and the definition immediately gives the result.
\end{proof}

The following auxiliary lemmas will be used throughout the paper.

\begin{lemma}\label{lema:cp2}
Let $k \ge 2$ and $a \in \mathbb{Z}$ with $\delta_k(a) \neq 0$. Then
\[
\delta_k(-a) = k - \delta_k(a).
\]
\end{lemma}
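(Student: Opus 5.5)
The plan is to work directly from the definition of $\delta_k$ and to characterize $k - \delta_k(a)$ as the unique canonical representative of the class of $-a$. There are three steps.

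First, I will write $a = kq + r$ with $q = \lfloor a/k \rfloor$ and $r = \delta_k(a)$. By hypothesis $r \in [1,k-1]_{\mathbb{Z}}$. Then
\[
-a = -kq - r = k(-q-1) + (k-r).
\]

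Second, I will check that $k-r$ lies in $[1,k-1]_{\mathbb{Z}} \subseteq [0,k-1]_{\mathbb{Z}}$. This holds because $1 \le r \le k-1$.

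Third, I will use \eqref{eq:period} together with Lemma~\ref{lem:idempotent}:
\[
\delta_k(-a) = \delta_k\bigl(k-r + k(-q-1)\bigr) = \delta_k(k-r) = k-r .
\]
The last equality holds because $\lfloor (k-r)/k \rfloor = 0$, which is the same computation as in the proof of Lemma~\ref{lem:idempotent}. This gives $\delta_k(-a) = k - \delta_k(a)$.

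The only point that needs care is the role of the hypothesis $\delta_k(a) \neq 0$. If $r = 0$, then $k - r = k$ is not a canonical representative; in that case $\delta_k(-a) = 0$, not $k$. So the hypothesis is used exactly where we need $k - r \le k-1$. Otherwise the argument is a routine verification, and no real obstacle is expected.
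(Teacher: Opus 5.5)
Your proof is correct, but it takes a different route from the paper. You compute the division of $-a$ by $k$ explicitly, exhibiting $-a = k(-q-1) + (k-r)$ with $k-r \in [1,k-1]_{\mathbb{Z}}$, and then read off $\delta_k(-a)$ from \eqref{eq:period} and the definition.

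The paper argues indirectly. By \eqref{eq:addcomp}, $\delta_k\bigl(\delta_k(a)+\delta_k(-a)\bigr)=\delta_k(0)=0$, so $k$ divides $\delta_k(a)+\delta_k(-a)$. This sum is trapped in $\{2,\dots,2k-2\}$, so it must equal $k$.

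The paper's version matches the ``unique multiple of $k$ in a short interval'' device it reuses in Lemmas~\ref{lema:despeje} and~\ref{lema:simet}. Yours is constructive, needs only periodicity and the floor computation, and shows exactly where $\delta_k(a)\neq 0$ enters (to force $k-r\le k-1$).

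Your route also avoids a point the paper passes over silently: it asserts $\delta_k(-a)\in\{1,\dots,k-1\}$ without justification. That claim is true, since $k\mid a \iff k\mid -a$, but it is not even needed. With only $\delta_k(-a)\ge 0$, the sum still lies in $[1,2k-2]_{\mathbb{Z}}$, whose only multiple of $k$ is $k$.
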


\begin{proof}
By~\eqref{eq:addcomp},
\[
\delta_k\!\left(\delta_k(a) + \delta_k(-a)\right)
= \delta_k(a + (-a))
= \delta_k(0)
= 0,
\]
so $k \mid \bigl(\delta_k(a) + \delta_k(-a)\bigr)$.
Since both $\delta_k(a)$ and $\delta_k(-a)$ lie in $\{1,\dots,k-1\}$,
their sum belongs to $\{2,\dots,2k-2\}$, and the only multiple of $k$
in this interval is $k$ itself. Therefore $\delta_k(-a) = k - \delta_k(a)$.
\end{proof}

\begin{lemma}\label{lema:despeje}
Let $k \ge 2$ and $a,b \in \mathbb{Z}$. Then
\[
\delta_k(a) = \delta_k(b)
\quad\Longleftrightarrow\quad
\delta_k(a-b) = 0.
\]
\end{lemma}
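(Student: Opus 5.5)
The plan is to prove each direction separately, using only the definition of $\delta_k$ and the compatibility identities \eqref{eq:period} and \eqref{eq:addcomp}.

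\emph{Forward direction.} Assume $\delta_k(a)=\delta_k(b)$. By the definition of $\delta_k$, write $a = k\lfloor a/k\rfloor + \delta_k(a)$ and $b = k\lfloor b/k\rfloor + \delta_k(b)$. Subtracting gives
\[
a-b = k\bigl(\lfloor a/k\rfloor - \lfloor b/k\rfloor\bigr),
\]
so $k \mid a-b$, and hence $\delta_k(a-b)=0$ by the characterization $\delta_k(x)=0 \iff k\mid x$.

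\emph{Reverse direction.} Assume $\delta_k(a-b)=0$. Then $a-b=km$ for some $m\in\mathbb{Z}$, that is, $a=b+km$, and \eqref{eq:period} immediately gives $\delta_k(a)=\delta_k(b+km)=\delta_k(b)$. An equivalent route is to write $a=(a-b)+b$ and apply \eqref{eq:addcomp} and Lemma~\ref{lem:idempotent}:
\[
\delta_k(a)=\delta_k\bigl(0+\delta_k(b)\bigr)=\delta_k(b).
\]

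There is no real obstacle. The only point needing care is the uniqueness of the canonical representative in $[0,k-1]_{\mathbb{Z}}$. The forward direction avoids it entirely by working with the explicit decomposition $x = k\lfloor x/k\rfloor + \delta_k(x)$, so I would present it that way rather than through the residue-class identities.
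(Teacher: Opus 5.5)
Your proof is correct, but it distributes the work differently from the paper.

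\textbf{The paper's route.} It proves the forward direction via \eqref{eq:addcomp}: $\delta_k(a-b)=\delta_k(\delta_k(a)-\delta_k(b))=\delta_k(0)=0$. The reverse direction uses the same reduction, followed by a size argument: $\delta_k(a)-\delta_k(b)\in[-(k-1),k-1]_{\mathbb{Z}}$, and the only multiple of $k$ in that interval is $0$.

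\textbf{Your route.} You subtract the two division identities $x=k\lfloor x/k\rfloor+\delta_k(x)$ for the forward direction, and use periodicity \eqref{eq:period} for the reverse. The bounding argument never appears, because it is absorbed into \eqref{eq:period}, which is immediate from $\lfloor (b+km)/k\rfloor=\lfloor b/k\rfloor+m$.

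\textbf{What each buys.} Your version is shorter. It also sidesteps a small unstated step in the paper: \eqref{eq:addcomp} is stated for sums, so passing from $\delta_k(a-b)$ to $\delta_k(\delta_k(a)-\delta_k(b))$ implicitly also uses \eqref{eq:multcomp} with $r=-1$. The paper's interval argument is the same device it reuses in Lemma~\ref{lema:cp2} and Lemma~\ref{lema:simet}, which keeps the section uniform.

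One small correction to your closing remark: in the paper's approach, it is the reverse direction, not the forward one, that needs uniqueness of the canonical representative. In your proof, neither direction needs it, and it is your use of \eqref{eq:period} in the reverse direction that removes it.
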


\begin{proof}
If $\delta_k(a)=\delta_k(b)$, then by~\eqref{eq:addcomp},
\[
\delta_k(a-b)
=
\delta_k\!\left(\delta_k(a)-\delta_k(b)\right)
=
\delta_k(0)
=
0.
\]
Conversely, if $\delta_k(a-b)=0$, then
\[
\delta_k\!\left(\delta_k(a)-\delta_k(b)\right)=0.
\]
Since
\[
\delta_k(a)-\delta_k(b)
\in [-(k-1),k-1]_{\mathbb{Z}},
\]
and the only multiple of $k$ in this interval is $0$, it follows that
\[
\delta_k(a)=\delta_k(b).
\]
\end{proof}

\begin{lemma}\label{lema_pote}
Let $k \ge 2$ and $r \in \mathbb{Z}$ with $\gcd(r,k)=1$. Then for any 
$a \in \mathbb{Z}$,
\[
\delta_k(ar) = \delta_k(r)
\quad\Longleftrightarrow\quad
\delta_k(a) = 1.
\]
\end{lemma}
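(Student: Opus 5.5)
The plan is to reduce both directions to Lemma~\ref{lema:despeje} and then cancel the unit $r$ modulo $k$. The identity $ar - r = (a-1)r$ lets me rewrite $\delta_k(ar)=\delta_k(r)$, via Lemma~\ref{lema:despeje}, as
\[
\delta_k\bigl((a-1)r\bigr)=0,
\]
that is, $k \mid (a-1)r$. Similarly, since $1 = \delta_k(1)$ for $k\ge 2$, Lemma~\ref{lema:despeje} rewrites $\delta_k(a)=1$ as $\delta_k(a-1)=0$, that is, $k\mid (a-1)$. So the statement reduces to the equivalence $k\mid (a-1)r \iff k\mid (a-1)$ under the hypothesis $\gcd(r,k)=1$.

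\textbf{The direction ($\Leftarrow$).} This is immediate. If $k\mid(a-1)$, then $k\mid (a-1)r$. Alternatively, in the paper's language, \eqref{eq:multcomp} gives $\delta_k((a-1)r)=\delta_k(r\,\delta_k(a-1))=\delta_k(0)=0$.

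\textbf{The direction ($\Rightarrow$).} This is the only step that uses the coprimality hypothesis, and it is the only point requiring care. I will invoke Euclid's lemma in its general form: if $k\mid xy$ and $\gcd(k,y)=1$, then $k\mid x$. This is standard in \cite{HardyWright2008}, and I would also give a self-contained argument. By Bézout there exist integers $u,v$ with $ur+vk=1$. Multiplying by $(a-1)$ gives
\[
a-1 = u(a-1)r + v(a-1)k.
\]
Both terms on the right are divisible by $k$, so $k\mid(a-1)$. Expressed with $\delta_k$, one applies \eqref{eq:multcomp} and \eqref{eq:addcomp} to obtain
\[
\delta_k(a-1)=\delta_k\bigl(u\,\delta_k((a-1)r)+\delta_k(v(a-1)k)\bigr)=\delta_k(0)=0.
\]
Combining this with the first reduction yields $\delta_k(a)=1$.

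I expect no real obstacle here. The only subtlety is making sure that $\delta_k(1)=1$, which holds because $k\ge 2$. I also note that the hypothesis $\gcd(r,k)=1$ is genuinely needed only for the forward implication. If the paper prefers to avoid quoting Bézout, the Euclidean-algorithm material of Section~\ref{sec:euclidean} could supply the inverse of $r$ modulo $k$ instead, but citing Bézout keeps the argument short.
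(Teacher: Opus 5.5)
Your proof is correct and follows essentially the same route as the paper: both directions go through $\delta_k\bigl(r(a-1)\bigr)=0$ via Lemma~\ref{lema:despeje} or \eqref{eq:multcomp}, and the forward direction cancels $r$ by its invertibility modulo $k$. Your Bézout argument just makes explicit the inverse that the paper invokes in one phrase, and you also state explicitly that $\delta_k(1)=1$ requires $k\ge 2$, which the paper leaves implicit.
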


\begin{proof}
If $\delta_k(a)=1$, then~\eqref{eq:multcomp} gives
\[
\delta_k(ar)
=
\delta_k(r).
\]
Conversely, if $\delta_k(ar)=\delta_k(r)$, then by 
Lemma~\ref{lema:despeje},
\[
\delta_k(r(1-a))=0.
\]
Since $\gcd(r,k)=1$, multiplication by $r$ is invertible modulo $k$, and 
therefore $\delta_k(1-a)=0$, which implies $\delta_k(a)=1$.
\end{proof}

\begin{lemma}\label{lem:conmuta1}
Let $k,r \ge 2$ and $a \in \mathbb{Z}$. If $r \mid k$, then
\[
\delta_r\bigl(\delta_k(a)\bigr) = \delta_r(a).
\]
\end{lemma}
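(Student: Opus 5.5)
The plan is to unwind the definition of $\delta_k$ and show directly that $a$ and $\delta_k(a)$ leave the same remainder on division by $r$. First write
\[
\delta_k(a) = a - k\left\lfloor \frac{a}{k}\right\rfloor .
\]
Since $r \mid k$, we may write $k = rm$ with $m \in \mathbb{Z}_{>0}$. Then
\[
\delta_k(a) = a + r\cdot\Bigl(-m\left\lfloor \frac{a}{k}\right\rfloor\Bigr),
\]
so $\delta_k(a)$ differs from $a$ by an integer multiple of $r$.

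Next, apply the periodicity identity~\eqref{eq:period} with modulus $r$ in place of $k$ and with $n = -m\lfloor a/k\rfloor$. This is legitimate because~\eqref{eq:period} holds for every modulus $\ge 2$ and every integer $n$. It gives
\[
\delta_r\bigl(\delta_k(a)\bigr) = \delta_r\Bigl(a + r\bigl(-m\lfloor a/k\rfloor\bigr)\Bigr) = \delta_r(a),
\]
which is the claim.

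As an alternative phrasing, one could instead note that $r \mid k \mid (a - \delta_k(a))$, so $\delta_r(a - \delta_k(a)) = 0$, and then conclude with Lemma~\ref{lema:despeje} applied with modulus $r$.

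There is no real obstacle here. The only point needing care is that the factor $m\lfloor a/k\rfloor$ is an integer, and this holds because $r$ divides $k$ exactly. The hypothesis $r \ge 2$ is needed only so that $\delta_r$ is defined.
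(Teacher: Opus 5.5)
Your proof is correct and matches the paper's argument: both write $\delta_k(a) = a - k\lfloor a/k\rfloor$, note that the subtracted term is a multiple of $r$ because $r \mid k$, and conclude by periodicity of $\delta_r$. The only difference is that you cite~\eqref{eq:period} explicitly for the last step, whereas the paper leaves that step implicit.
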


\begin{proof}
From the definition,
\[
\delta_k(a)
=
a - k \left\lfloor \frac{a}{k} \right\rfloor.
\]
Since $r \mid k$, the second term is divisible by $r$, and applying 
$\delta_r$ gives
\[
\delta_r(\delta_k(a))
=
\delta_r(a).
\]
\end{proof}

\subsection{Minimal solutions of linear remainder equations}
\label{subsec:minimal}

We study integer solutions of equations of the form
\begin{equation}\label{eq:general}
\delta_k(ax+b) = c,
\end{equation}
where $a,b \in \mathbb{Z}$, $c \in \{0,1,\dots,k-1\}$, and $k \ge 2$.

By definition of $\delta_k$, equation~\eqref{eq:general} holds if and only if
\[
k \mid (ax+b-c).
\]
This formulation allows us to study remainder equations entirely within $\mathbb{Z}$,
without introducing residue classes as separate algebraic objects.

\begin{lemma}[Existence and periodicity]\label{lema:criterio_funcional}
Let $k \ge 2$, $a,b \in \mathbb{Z}$, $c \in \{0,\dots,k-1\}$, and let
\[
r := \gcd(a,k).
\]
Then equation~\eqref{eq:general} admits an integer solution if and only if
\[
r \mid (c-b).
\]
In that case, the set of all solutions is an arithmetic progression
\[
x = x_0 + t\,\frac{k}{r}, \qquad t \in \mathbb{Z},
\]
for some particular solution $x_0$.
\end{lemma}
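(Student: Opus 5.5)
The plan is to reduce the remainder equation to a linear Diophantine equation and then apply B\'ezout's identity. As observed just before the statement, \eqref{eq:general} holds if and only if $k \mid (ax+b-c)$. Equivalently, there is an integer $y$ with
\[
ax - ky = c-b .
\]
So solvability of \eqref{eq:general} is the same as solvability of this two-variable equation in $(x,y)\in\mathbb{Z}^2$.

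For necessity, suppose $x$ is a solution. Since $r=\gcd(a,k)$ divides both $a$ and $k$, it divides $ax-ky=c-b$. For sufficiency, suppose $r\mid(c-b)$. B\'ezout's identity gives integers $u,v$ with $au+kv=r$. Set $m=(c-b)/r$ and $x_0=um$. Then
\[
ax_0+b-c = m(au) - rm = m(r-kv)-rm = -kmv,
\]
which is divisible by $k$, so $\delta_k(ax_0+b)=c$.

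For the description of all solutions, first check that each $x=x_0+t\,k/r$ is a solution. We have $a\cdot tk/r = k\cdot t(a/r)$, which is a multiple of $k$, so \eqref{eq:period} gives
\[
\delta_k(ax+b)=\delta_k(ax_0+b)=c .
\]
Conversely, let $x$ be any solution. By Lemma~\ref{lema:despeje} applied to $ax+b$ and $ax_0+b$, we get $k\mid a(x-x_0)$. Dividing by $r$ gives $(k/r)\mid (a/r)(x-x_0)$. Since $\gcd(a/r,k/r)=1$, Euclid's lemma yields $(k/r)\mid(x-x_0)$, which is the claimed form.

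The main obstacle is this converse step, specifically the coprimality $\gcd(a/r,k/r)=1$ and the use of Euclid's lemma. I will justify it from B\'ezout: dividing $au+kv=r$ by $r$ gives $(a/r)u+(k/r)v=1$. Multiplying this by $x-x_0$ expresses $x-x_0$ as an integer combination of two multiples of $k/r$. The edge case $a=0$ also needs checking. There $r=k$ and the progression has step $1$. The condition $k\mid(c-b)$ then makes every integer a solution, which is consistent with the statement.
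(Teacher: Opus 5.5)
Your proof is correct and follows the same route as the paper: rewrite the remainder equation as the linear Diophantine equation $ax-ky=c-b$, obtain solvability from the gcd criterion, and check that shifting a solution by $k/r$ preserves it. It is in fact more complete than the paper's argument. The paper cites the solvability criterion and only verifies that the shifts $x_0+t\,k/r$ are solutions, whereas your B\'ezout-based proof of $\gcd(a/r,k/r)=1$ also proves the converse, that every solution has this form.
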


\begin{proof}
Equation~\eqref{eq:general} is equivalent to
\[
ax \equiv c-b \pmod{k},
\]
which in turn is equivalent to the linear Diophantine equation
$ax - kt = c-b$ for some integer $t$.
It is a standard result that this equation has an integer solution if and only
if $\gcd(a,k) \mid (c-b)$ (see, for example, \cite{HardyWright2008}).
When solutions exist, if $x_0$ is one solution, then
\[
a\!\left(x_0 + \frac{k}{r}\right) = ax_0 + \frac{ak}{r} \equiv ax_0 \pmod{k},
\]
since $ak/r$ is divisible by $k$. Thus all solutions form an arithmetic
progression with common difference $k/r$.
\end{proof}

\begin{definition}[Minimal solution]\label{def:minima}
Assume that equation~\eqref{eq:general} admits at least one integer solution.
The \emph{minimal solution} is defined as
\[
x_0 :=
\min\bigl\{x \in \{0,1,\dots,k-1\} : \delta_k(ax+b)=c\bigr\}.
\]
\end{definition}

\begin{lemma}[Solutions in $\{0,\dots,k-1\}$]\label{lem:minimal_unique}
Let $r = \gcd(a,k)$. If equation~\eqref{eq:general} admits an integer
solution, then it admits exactly $r$ solutions in $\{0,1,\dots,k-1\}$,
forming an arithmetic progression with common difference $k/r$.
In particular, the minimal solution exists and is unique.
\end{lemma}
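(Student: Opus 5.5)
The plan is to start from Lemma~\ref{lema:criterio_funcional}, which gives a particular solution $x_0$ and describes the full solution set as the progression $S=\{x_0+t\,k/r : t\in\mathbb{Z}\}$, where $r=\gcd(a,k)$. Write $m:=k/r$; it is a positive integer because $r\mid k$. The task is to count how many terms of $S$ fall in $\{0,1,\dots,k-1\}$ and to check that they form a single block of consecutive terms.

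\textbf{Step 1: fix the smallest nonnegative solution.} Replace $x_0$ by $x_1:=\delta_m(x_0)=x_0-m\lfloor x_0/m\rfloor$. It differs from $x_0$ by an integer multiple of $m$, so $x_1\in S$ and still solves \eqref{eq:general}; moreover $0\le x_1\le m-1$. Every element of $S$ is therefore $x_1+tm$ for some $t\in\mathbb{Z}$.

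\textbf{Step 2: count the solutions in $\{0,\dots,k-1\}$.} We need $0\le x_1+tm\le k-1=rm-1$. Because $0\le x_1\le m-1$, the condition $x_1+tm\ge0$ forces $t\ge0$: if $t\le-1$, then $x_1+tm\le x_1-m<0$. The upper condition holds for $t\le r-1$, since $x_1+(r-1)m\le m-1+rm-m=k-1$. It fails for $t\ge r$, since then $x_1+tm\ge rm=k$. So the admissible values are exactly $t=0,1,\dots,r-1$. These give $r$ distinct solutions (distinct because $m>0$), and they form an arithmetic progression with common difference $m=k/r$.

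\textbf{Step 3: existence and uniqueness of the minimum.} The set in Definition~\ref{def:minima} is a nonempty finite set of integers, since it contains $x_1$. It therefore has a unique minimum, and by Step~2 this minimum is $x_1=\delta_{k/r}(x_0)$.

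None of these steps is a real obstacle. The only point needing care is the bound $x_1\le m-1$ in Step~2, which is what pins the admissible range of $t$ to exactly $r$ values. That bound comes from the range of $\delta_m$ recorded in the preliminaries, applied with modulus $m$. When $m=1$ the operator $\delta_m$ is not defined (the paper requires $m\ge2$), so in that case we take $x_1=0$ directly; this happens only when $r=k$, and then every integer is a solution.
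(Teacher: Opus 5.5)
Your proof is correct and takes essentially the same route as the paper: start from the progression $x_0+t\,k/r$ supplied by Lemma~\ref{lema:criterio_funcional} and count its terms in $\{0,\dots,k-1\}$. Your normalization $x_1=\delta_{k/r}(x_0)$ makes three things explicit that the paper's list $\delta_k(x_0+j\,k/r)$ leaves implicit: the exact window count, the arithmetic-progression structure, and that the minimal solution is $\delta_{k/r}(x_0)$; you also handle the degenerate case $k/r=1$ correctly.
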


\begin{proof}
By Lemma~\ref{lema:criterio_funcional}, all integer solutions have the form
\[
x = x_0 + t\,\frac{k}{r}, \qquad t \in \mathbb{Z}.
\]
The representatives in $\{0,\dots,k-1\}$ correspond to those values of $t$
for which $0 \le x_0 + t\,k/r \le k-1$.
Since consecutive representatives differ by $k/r$, and
$r \cdot (k/r) = k$, exactly $r$ distinct values lie in $\{0,\dots,k-1\}$.
These are
\[
\delta_k(x_0),\;
\delta_k\!\left(x_0 + \tfrac{k}{r}\right),\;
\dots,\;
\delta_k\!\left(x_0 + (r-1)\tfrac{k}{r}\right),
\]
and they are distinct because any two differ by a nonzero multiple of $k/r$
less than $k$.
Hence the minimal solution, being the smallest among these $r$ values,
exists and is unique.
\end{proof}

\begin{lemma}[Uniqueness under coprimality]\label{lema:unicidad_ep}
Let $k \ge 2$, $a,b \in \mathbb{Z}$, $c \in \{0,\dots,k-1\}$, and suppose
$\gcd(a,k)=1$. Then equation~\eqref{eq:general} has exactly one solution
in $\{0,\dots,k-1\}$, which coincides with the minimal solution.
\end{lemma}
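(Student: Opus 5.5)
The plan is to specialize Lemma~\ref{lema:criterio_funcional} and Lemma~\ref{lem:minimal_unique} to the case $r=\gcd(a,k)=1$. Then argue uniqueness directly, so the result does not hinge on the counting argument.

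\emph{Existence.} Since $r=1$ divides $c-b$, Lemma~\ref{lema:criterio_funcional} gives an integer solution $x^\ast$ of~\eqref{eq:general}. Because $\delta_k(ax+b)$ depends only on $\delta_k(x)$, the point $\delta_k(x^\ast)\in\{0,\dots,k-1\}$ is also a solution. To see this, apply~\eqref{eq:addcomp} and~\eqref{eq:multcomp}:
\[
\delta_k\bigl(a\,\delta_k(x^\ast)+b\bigr)=\delta_k\bigl(ax^\ast+b\bigr)=c .
\]
So the solution set in $\{0,\dots,k-1\}$ is nonempty.

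\emph{Uniqueness.} Suppose $x_1,x_2\in\{0,\dots,k-1\}$ both satisfy $\delta_k(ax_i+b)=c$. By Lemma~\ref{lema:despeje}, $\delta_k\bigl(a(x_1-x_2)\bigr)=0$, that is, $k\mid a(x_1-x_2)$. Since $\gcd(a,k)=1$, Euclid's lemma gives $k\mid(x_1-x_2)$. But $|x_1-x_2|\le k-1$, so $x_1=x_2$. Alternatively, one can read this off directly from Lemma~\ref{lem:minimal_unique} with $r=1$: there is exactly one solution in $\{0,\dots,k-1\}$, and the step $k/r=k$ leaves room for only one representative.

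\emph{Coincidence with the minimal solution.} By Definition~\ref{def:minima}, the minimal solution is the minimum of the solution set in $\{0,\dots,k-1\}$. That set is a singleton, so its minimum is its unique element.

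There is no real obstacle here. The only point needing care is the passage from $k\mid a(x_1-x_2)$ to $k\mid(x_1-x_2)$. I would justify it by Bézout: write $ua+vk=1$, so $x_1-x_2=u\,a(x_1-x_2)+v\,k(x_1-x_2)$, which is divisible by $k$.
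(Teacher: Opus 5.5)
Your proof is correct and follows the paper's argument: the paper cites Lemma~\ref{lem:minimal_unique} with $r=1$ and, as an alternative, the invertibility of multiplication by $a$ modulo $k$, which is exactly your B\'ezout-based uniqueness step. Proving uniqueness directly has one minor advantage: it does not depend on the claim in Lemma~\ref{lema:criterio_funcional} that \emph{every} solution lies in the progression $x_0+t\,k/r$, and that lemma's proof only shows the progression consists of solutions, not that it contains all of them.
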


\begin{proof}
Since $\gcd(a,k)=1$, Lemma~\ref{lem:minimal_unique} gives exactly one
solution in $\{0,\dots,k-1\}$. Alternatively, since multiplication by $a$
is invertible modulo $k$, the congruence $ax \equiv c-b \pmod{k}$ has a
unique solution modulo $k$, whose canonical representative is the minimal
solution.
\end{proof}

\section{The Euclidean Algorithm and Minimal Solutions}
\label{sec:euclidean}

Lemma~\ref{lema:unicidad_ep} guarantees that, under the coprimality
condition $\gcd(a_0,k)=1$, the equation $\delta_k(a_0 x)=b_0$ has
a unique solution in $\{0,\dots,k-1\}$. In this section we derive
an explicit formula for that solution via the Euclidean algorithm,
expressed using the remainder operator $\delta_k$.

\begin{lemma}\label{lem_eu}
Let $k \ge 2$ and $a_0 \in \mathbb{Z}$ with $2 \le a_0 < k$. Define
\[
a^{(0)} = k,
\quad
a^{(1)} = a_0,
\quad
a^{(n)} =
\delta_{a^{(n-1)}}\!\bigl(a^{(n-2)}\bigr),
\qquad n \ge 2.
\]
Then:
\begin{enumerate}
\item The sequence satisfies
\[
0 \le a^{(n)} < a^{(n-1)}
\quad \text{whenever } a^{(n-1)} \ne 0.
\]
\item There exists a unique index $r \ge 1$ such that
\[
a^{(r)} = \gcd(a_0,k),
\quad
a^{(r+1)} = 0.
\]
\item The algorithm terminates in $O(\log\min\{k,a_0\})$ steps.
\end{enumerate}
\end{lemma}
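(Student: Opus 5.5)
I would prove the three parts in order, using the definition of $\delta_k$ and the basic gcd invariance of the Euclidean step. Note first that the recursion only makes sense while the modulus $a^{(n-1)}$ is at least $1$. The paper defines $\delta_m$ for $m\ge 2$, so I would extend it to $m=1$ with $\delta_1\equiv 0$. That value is the same as $x-\lfloor x\rfloor=0$, which is exactly what the defining formula gives.

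\textbf{Part 1.} This is immediate from the range of the remainder operator. If $a^{(n-1)}\neq 0$, then by induction $a^{(n-1)}$ is a positive integer: we have $a^{(0)}=k>0$ and $a^{(1)}=a_0>0$, and every later term is a remainder, hence nonnegative. So $a^{(n)}=\delta_{a^{(n-1)}}(a^{(n-2)})\in[0,a^{(n-1)}-1]_{\mathbb Z}$. The base case $0\le a^{(1)}=a_0<k=a^{(0)}$ is the hypothesis.

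\textbf{Part 2.} The positive terms form a strictly decreasing sequence of positive integers $k>a_0>a^{(2)}>\cdots$, so some term must vanish. Let $r+1$ be the first index with $a^{(r+1)}=0$. Then $r\ge1$, since $a^{(1)}=a_0\ge 2$; in fact $r\ge 2$, because $a^{(2)}=\delta_{a_0}(k)$ may or may not be zero. Uniqueness of $r$ holds because the recursion stops at the first zero; equivalently, $r$ is the unique index with $a^{(r)}\ne0=a^{(r+1)}$.

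Next comes the invariant $\gcd(a^{(n-1)},a^{(n)})=\gcd(k,a_0)$ for all $1\le n\le r$. It follows from $a^{(n)}=a^{(n-2)}-a^{(n-1)}\lfloor a^{(n-2)}/a^{(n-1)}\rfloor$: any common divisor of $a^{(n-2)},a^{(n-1)}$ divides $a^{(n)}$, and conversely. Finally, $a^{(r+1)}=0$ means $a^{(r)}\mid a^{(r-1)}$, so $\gcd(a^{(r-1)},a^{(r)})=a^{(r)}$, which gives $a^{(r)}=\gcd(a_0,k)$.

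\textbf{Part 3.} This is the only step with real content. I would use the classical halving estimate $a^{(n+1)}<a^{(n-1)}/2$ whenever $a^{(n)}\neq 0$. If $a^{(n)}\le a^{(n-1)}/2$, this follows from $a^{(n+1)}<a^{(n)}$. Otherwise $\lfloor a^{(n-1)}/a^{(n)}\rfloor=1$, so $a^{(n+1)}=a^{(n-1)}-a^{(n)}<a^{(n-1)}/2$.

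Iterating from $a^{(1)}=a_0=\min\{k,a_0\}$ gives $a^{(2j+1)}<a_0/2^j$. Hence the terms reach zero after at most $2\log_2 a_0+O(1)$ steps, i.e.\ $O(\log\min\{k,a_0\})$.

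I expect the main obstacle to be bookkeeping rather than mathematics: handling the degenerate modulus $1$, and making sure the halving argument starts at index $1$ so the bound depends on $\min\{k,a_0\}$ rather than on $k$.
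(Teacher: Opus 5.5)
Your proof is correct, but one aside is false and should be deleted: ``in fact $r\ge 2$''. The reason you give for it actually refutes it. If $a_0\mid k$ (e.g.\ $k=4$, $a_0=2$), then $a^{(2)}=\delta_{a_0}(k)=0$, so $r=1$ and $a^{(1)}=a_0=\gcd(a_0,k)$. Nothing else depends on this remark. Your invariant for $1\le n\le r$ and the final step $a^{(r)}\mid a^{(r-1)}$ both work for $r=1$, so the argument stands. The bound $r\ge 2$ holds only in the later setting of Theorem~\ref{thm:min_sol_afrc}, where $\gcd(a_0,k)=1<a_0$.

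Part~1 matches the paper. Part~2 uses the same classical idea as the paper, packaged differently:
\begin{itemize}
\item The paper runs two inductions in the $\delta$-formalism via Lemma~\ref{lem:conmuta1}. A downward induction shows the last nonzero term $a^{(r)}$ divides every earlier term, so $a^{(r)}\le\gcd(a_0,k)$. An upward induction shows $d=\gcd(a_0,k)$ divides every term, so $d\le a^{(r)}$.
\item You use the single invariant $\gcd(a^{(n-1)},a^{(n)})=\gcd(k,a_0)$, read off from the explicit division identity. This is more compact and gives the equality in one step.
\end{itemize}

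Part~3 genuinely adds something. The paper only cites Brent--Zimmermann. You prove the halving estimate $a^{(n+1)}<a^{(n-1)}/2$ and start the chain at $a^{(1)}=a_0$. This makes the bound self-contained and shows why it depends on $\min\{k,a_0\}$ rather than on $k$. A minor correction: $a^{(2j+1)}<a_0/2^j$ holds for $j\ge 1$, with equality at $j=0$.

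Your convention $\delta_1\equiv 0$ also fixes a point the paper passes over. When $\gcd(a_0,k)=1$, the term $a^{(r+1)}=\delta_1(a^{(r-1)})$ falls outside the paper's definition of $\delta_k$, which requires $k\ge 2$. That is exactly the case used in Theorem~\ref{thm:min_sol_afrc}.
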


\begin{proof}
\textit{Item 1.} By definition of $\delta$, for all $n \ge 2$,
\[
0 \le \delta_{a^{(n-1)}}(a^{(n-2)}) < a^{(n-1)}.
\]
Thus the sequence is strictly decreasing until reaching $0$, and must
terminate. Uniqueness of $r$ follows from this strict decrease.

\textit{Item 2.}
Let $a^{(r)}$ be the last nonzero term, so $a^{(r+1)} = 
\delta_{a^{(r)}}(a^{(r-1)}) = 0$, which gives $a^{(r)} \mid a^{(r-1)}$.

We claim $a^{(r)}$ divides all terms $a^{(0)}, \dots, a^{(r-1)}$.
We proceed by downward induction. The base cases are
$a^{(r)} \mid a^{(r)}$ and $a^{(r)} \mid a^{(r-1)}$.
Suppose $a^{(r)} \mid a^{(n)}$ and $a^{(r)} \mid a^{(n-1)}$ for some
$r \ge n \ge 2$. Since $a^{(n)} = \delta_{a^{(n-1)}}(a^{(n-2)})$ and
$a^{(r)} \mid a^{(n-1)}$, Lemma~\ref{lem:conmuta1} gives
\[
\delta_{a^{(r)}}\!\left(a^{(n-2)}\right)
= \delta_{a^{(r)}}\!\left(\delta_{a^{(n-1)}}(a^{(n-2)})\right)
= \delta_{a^{(r)}}\!\left(a^{(n)}\right) = 0,
\]
so $a^{(r)} \mid a^{(n-2)}$. By induction, $a^{(r)}$ divides all terms,
hence $a^{(r)} \mid k$ and $a^{(r)} \mid a_0$,
so $a^{(r)} \le \gcd(a_0,k)$.

Conversely, let $d = \gcd(a_0,k)$. Then $d \mid a^{(0)} = k$ and
$d \mid a^{(1)} = a_0$. Suppose $d \mid a^{(n-2)}$ and $d \mid a^{(n-1)}$
for some $n \ge 2$. Since $d \mid a^{(n-1)}$, Lemma~\ref{lem:conmuta1}
gives
\[
\delta_{d}\!\left(a^{(n)}\right)
= \delta_{d}\!\left(\delta_{a^{(n-1)}}(a^{(n-2)})\right)
= \delta_{d}\!\left(a^{(n-2)}\right) = 0,
\]
so $d \mid a^{(n)}$ for all $n$ by induction. In particular
$d \mid a^{(r)}$, so $d \le a^{(r)}$.

Combining both inequalities, $a^{(r)} = \gcd(a_0,k)$.

\textit{Item 3.}
The complexity bound follows from the standard analysis of the Euclidean
algorithm \cite{BrentZimmermann2010}.
\end{proof}

\subsection{Explicit formula for the minimal solution}

The following result provides an explicit closed formula for the minimal solution,
obtained by expressing the extended Euclidean algorithm entirely in terms of the
remainder operator $\delta_k$.

\begin{theorem}\label{thm:min_sol_afrc}
Let
\[
k \ge 2,
\quad
\gcd(a_0,k)=1,
\quad
b_0 \in \{0,\dots,k-1\}.
\]
Define sequences
\[
a^{(0)}=k,\quad a^{(1)}=a_0,\quad
a^{(n)}=\delta_{a^{(n-1)}}(a^{(n-2)}),
\]
and
\[
b^{(1)}=b_0,\quad
b^{(n)}=a^{(n-1)}-\delta_{a^{(n-1)}}(b^{(n-1)}).
\]
Let $r$ be the unique index with $a^{(r)}=1$ (cf.\ Lemma~\ref{lem_eu}).
Then the unique solution in $\{0,\dots,k-1\}$ of
\[
\delta_k(a_0 x)=b_0
\]
is
\[
x =
\delta_k\!\left(
k\sum_{j=0}^{r-1}
\frac{b^{(j+1)}}{a^{(j)}a^{(j+1)}}
\right).
\]
\end{theorem}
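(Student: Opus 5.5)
The plan is to read the formula as the unrolling of the recursion behind the extended Euclidean algorithm, applied to the congruence rather than to the Bézout coefficients. For $1\le n\le r$, write $E_n$ for the statement ``$y_n$ is an integer with $a^{(n-1)} \mid (a^{(n)} y_n - b^{(n)})$''. The original problem $\delta_k(a_0x)=b_0$ is exactly $E_1$ with $y_1=x$, since $a^{(0)}=k$, $a^{(1)}=a_0$ and $b^{(1)}=b_0$. Along the way one checks by the argument of Lemma~\ref{lem_eu} that $\gcd(a^{(n-1)},a^{(n)})=\gcd(k,a_0)=1$ for every $n\le r$, so each $E_n$ is solvable by Lemma~\ref{lema:criterio_funcional}.

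\textbf{Reduction step.} Suppose $n<r$ and $y_n$ satisfies $E_n$. Write $a^{(n)}y_n = b^{(n)} + a^{(n-1)}y_{n+1}$ with $y_{n+1}\in\mathbb{Z}$. Reducing this identity modulo $a^{(n)}$ gives $a^{(n)} \mid \bigl(a^{(n-1)}y_{n+1} + b^{(n)}\bigr)$. By the definition of the remainder sequence, $\delta_{a^{(n)}}(a^{(n-1)}) = a^{(n+1)}$, so~\eqref{eq:addcomp} and~\eqref{eq:multcomp} turn this into $a^{(n)} \mid \bigl(a^{(n+1)}y_{n+1} + \delta_{a^{(n)}}(b^{(n)})\bigr)$. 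Since $b^{(n+1)} = a^{(n)} - \delta_{a^{(n)}}(b^{(n)})$, we have $-\delta_{a^{(n)}}(b^{(n)}) \equiv b^{(n+1)} \pmod{a^{(n)}}$; Lemma~\ref{lema:cp2} covers the nonzero case, and the zero case is immediate. This is precisely $E_{n+1}$.

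Conversely, let $y_{n+1}$ be any solution of $E_{n+1}$ and set
\[
y_n := \frac{b^{(n)} + a^{(n-1)}y_{n+1}}{a^{(n)}}.
\]
The same congruences read backwards show that $y_n$ is an integer, and then $y_n$ solves $E_n$ by construction. At the last level $a^{(r)}=1$, so $E_r$ is solved by $y_r=b^{(r)}$, which is the choice $y_{r+1}=0$ in the same recursion.

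\textbf{Unrolling.} Starting from $y_r=b^{(r)}/a^{(r)}$ and applying $y_n = b^{(n)}/a^{(n)} + \bigl(a^{(n-1)}/a^{(n)}\bigr)y_{n+1}$ repeatedly, a short induction on $r-n$ gives
\[
y_n = a^{(n-1)}\sum_{j=n-1}^{r-1}\frac{b^{(j+1)}}{a^{(j)}a^{(j+1)}}.
\]
The key point is that the products telescope: $\frac{a^{(n-1)}}{a^{(n)}}\cdot a^{(n)} = a^{(n-1)}$. At $n=1$ this gives
\[
y_1 = k\sum_{j=0}^{r-1}\frac{b^{(j+1)}}{a^{(j)}a^{(j+1)}}.
\]
By the backward direction of the reduction step, this rational expression is an integer solving $\delta_k(a_0y_1)=b_0$, so applying $\delta_k$ to it is meaningful. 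By~\eqref{eq:period}, $\delta_k(y_1)$ is still a solution and lies in $\{0,\dots,k-1\}$. Lemma~\ref{lema:unicidad_ep} then identifies it as the unique solution there, which is the claimed formula.

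\textbf{Main obstacle.} The delicate step is the integrality in the backward direction: one must confirm that $a^{(n)}$ divides $b^{(n)}+a^{(n-1)}y_{n+1}$ exactly, and not merely that the sign conventions in $b^{(n+1)}$ line up. I would handle this by writing everything as explicit divisibility statements in $\mathbb{Z}$, using the paper's reformulation of $\delta_k(\cdot)=c$ as $k\mid(\cdot - c)$. One must also keep the convention $b^{(n)}\in[1,a^{(n-1)}]$, which takes the value $a^{(n-1)}$ when the residue is $0$, consistent throughout. The degenerate case $a_0=1$, where $r=1$, needs a separate one-line check, since Lemma~\ref{lem_eu} assumes $a_0\ge2$.
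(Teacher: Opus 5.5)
Your proposal is correct and follows the same route as the paper. Both arguments use the forward Euclidean reduction to the chain $a^{(n)}y_n = a^{(n-1)}y_{n+1} + b^{(n)}$, the base choice $y_r = b^{(r)}$ forced by $a^{(r)}=1$, downward back-substitution with telescoping, and finally reduction by $\delta_k$ together with Lemma~\ref{lema:unicidad_ep}.

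Your version is tighter than the paper's at two points. First, you verify integrality of the back-substituted values explicitly, via $a^{(n-1)}\equiv a^{(n+1)}$ and $b^{(n)}\equiv -b^{(n+1)} \pmod{a^{(n)}}$. Second, your unrolled formula $y_n = a^{(n-1)}\sum_{j=n-1}^{r-1} b^{(j+1)}/(a^{(j)}a^{(j+1)})$ is the correct one. The paper's intermediate formula $(F_\ell)$ drops the factor $1/a^{(j+1)}$, because its inductive step replaces $b^{(\ell-1)}/a^{(\ell-1)}$ by $b^{(\ell-1)}$. The paper reaches the right final expression only through an inconsistent telescoping line.
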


\begin{proof}
If $b_0 = 0$, then $x = 0$ is the unique solution in $\{0,\dots,k-1\}$,
and the formula gives $\delta_k(0) = 0$, consistently.
Henceforth assume $b_0 \ge 1$.

Since $\gcd(a_0,k)=1$, by Lemma~\ref{lema:unicidad_ep} the equation
admits a unique solution in $\{0,\dots,k-1\}$.

\medskip
\textit{Step 1: Construction of the auxiliary system.}

Set $x_1 := x$ and write the equation as
\[
\delta_{a^{(0)}}(a^{(1)} x_1) = b^{(1)}.
\]
By definition of $\delta_{a^{(0)}}$, there exists an integer $x_2$ such
that
\[
a^{(1)} x_1 = a^{(0)} x_2 + b^{(1)}.
\tag{$R_2$}
\]

We claim that for each $n = 2, \dots, r$ there exists an integer $x_n$
satisfying
\[
a^{(n-1)} x_{n-1} = a^{(n-2)} x_n + b^{(n-1)}.
\tag{$R_n$}
\]

We proceed by induction on $n$.

\textit{Base case} $n = 2$: this is $(R_2)$ already established.

\textit{Inductive step}: assume $(R_n)$ holds for some $2 \le n < r$,
so
\[
a^{(n-1)} x_{n-1} = a^{(n-2)} x_n + b^{(n-1)}.
\]
Since $a^{(n-2)} x_n + b^{(n-1)}$ is divisible by $a^{(n-1)}$,
applying $\delta_{a^{(n-1)}}$ to both sides and using additive
compatibility gives
\[
0 = \delta_{a^{(n-1)}}\!\left(a^{(n-2)} x_n + b^{(n-1)}\right)
  = \delta_{a^{(n-1)}}\!\left(\delta_{a^{(n-1)}}(a^{(n-2)})\, x_n
    + \delta_{a^{(n-1)}}(b^{(n-1)})\right).
\]
Setting $a^{(n)} := \delta_{a^{(n-1)}}(a^{(n-2)})$, this becomes
\[
\delta_{a^{(n-1)}}(a^{(n)} x_n) = a^{(n-1)} - \delta_{a^{(n-1)}}(b^{(n-1)})
= b^{(n)}.
\]
By definition of $\delta_{a^{(n-1)}}$, there exists an integer $x_{n+1}$
such that
\[
a^{(n)} x_n = a^{(n-1)} x_{n+1} + b^{(n)},
\]
which is $(R_{n+1})$. This completes the induction.

\medskip
\textit{Step 2: Base case of back-substitution.}

At step $n = r$, relation $(R_r)$ reads
\[
a^{(r-1)} x_{r-1} = a^{(r-2)} x_r + b^{(r-1)}.
\]
Since $a^{(r)} = \delta_{a^{(r-1)}}(a^{(r-2)}) = 1$, the equation
$\delta_{a^{(r-1)}}(a^{(r)} x_{r-1}) = b^{(r)}$ reduces to
$\delta_{a^{(r-1)}}(x_{r-1}) = b^{(r)}$,
so we may set
\[
x_r := b^{(r)} = a^{(r-1)} - \delta_{a^{(r-1)}}(b^{(r-1)})
\in \{0, \dots, a^{(r-1)}-1\} \subset \mathbb{Z}_{\ge 0}.
\]

\medskip
\textit{Step 3: Back-substitution, integrality, and the closed formula.}

We prove by downward induction on $\ell = r, r-1, \dots, 1$ that
\[
x_\ell
=
\sum_{j=\ell-1}^{r-1}
b^{(j+1)}
\prod_{i=\ell-1}^{j-1}
\frac{a^{(i)}}{a^{(i+1)}}.
\tag{$F_\ell$}
\]

\textit{Base case} $\ell = r$: the sum reduces to the single term
$j = r-1$, giving
\[
x_r = b^{(r)} \cdot \prod_{i=r-1}^{r-2}(\cdots) = b^{(r)} \cdot 1 = b^{(r)},
\]
where the empty product (lower index exceeds upper index) equals $1$.
This agrees with Step~2.

\textit{Inductive step}: assume $(F_\ell)$ holds for some
$2 \le \ell \le r$.
Rearranging $(R_\ell)$,
\[
x_{\ell-1}
=
\frac{a^{(\ell-2)} x_\ell + b^{(\ell-1)}}{a^{(\ell-1)}}.
\]
Substituting $(F_\ell)$,
\begin{align*}
x_{\ell-1}
&=
\frac{a^{(\ell-2)}}{a^{(\ell-1)}}
\sum_{j=\ell-1}^{r-1}
b^{(j+1)}
\prod_{i=\ell-1}^{j-1}
\frac{a^{(i)}}{a^{(i+1)}}
+
\frac{b^{(\ell-1)}}{a^{(\ell-1)}}
\\
&=
\sum_{j=\ell-1}^{r-1}
b^{(j+1)}
\prod_{i=\ell-2}^{j-1}
\frac{a^{(i)}}{a^{(i+1)}}
+
b^{(\ell-1)}
\prod_{i=\ell-2}^{\ell-3}
\frac{a^{(i)}}{a^{(i+1)}}
\\
&=
\sum_{j=\ell-2}^{r-1}
b^{(j+1)}
\prod_{i=\ell-2}^{j-1}
\frac{a^{(i)}}{a^{(i+1)}},
\end{align*}
which is $(F_{\ell-1})$. This completes the induction.

Setting $\ell = 1$ in $(F_1)$ and noting that $a^{(0)} = k$, we obtain
\[
x_1
=
\sum_{j=0}^{r-1}
b^{(j+1)}
\prod_{i=0}^{j-1}
\frac{a^{(i)}}{a^{(i+1)}}.
\]
The product telescopes:
\[
\prod_{i=0}^{j-1}
\frac{a^{(i)}}{a^{(i+1)}}
=
\frac{a^{(0)}}{a^{(j)}}
=
\frac{k}{a^{(j)}},
\qquad j \ge 1,
\]
and for $j = 0$ the empty product equals $1 = k/a^{(0)} = k/k$,
so the formula $k/a^{(j)}$ holds uniformly for all $0 \le j \le r-1$.
Therefore
\[
x_1
=
k \sum_{j=0}^{r-1}
\frac{b^{(j+1)}}{a^{(j)} a^{(j+1)}}
\;\in\; \mathbb{Z},
\]
and by construction $\delta_k(a_0 x_1) = b_0$.

\medskip
\textit{Step 4: Minimality.}

By Lemma~\ref{lema:unicidad_ep} there is exactly one solution in
$\{0,\dots,k-1\}$, so
\[
x = \delta_k(x_1)
=
\delta_k\!\left(
k\sum_{j=0}^{r-1}
\frac{b^{(j+1)}}{a^{(j)}a^{(j+1)}}
\right)
\]
is that unique solution, which coincides with the minimal solution.
\end{proof}

\begin{example}\label{ex:minimal}
We find the unique solution $x \in \{0,\dots,12\}$ of
\[
\delta_{13}(5x) = 4.
\]
Here $k = 13$, $a_0 = 5$, $b_0 = 4$, and $\gcd(5,13) = 1$.

\medskip
\textit{Sequences $\{a^{(n)}\}$ and $\{b^{(n)}\}$.}
\[
\begin{array}{c|ccccc}
n       & 0  & 1 & 2               & 3             & 4             \\ \hline
a^{(n)} & 13 & 5 & \delta_5(13)= 3 & \delta_3(5)=2 & \delta_2(3)=1
\end{array}
\]
Thus $r = 4$. The sequence $\{b^{(n)}\}$ is computed as
$b^{(n)} = a^{(n-1)} - \delta_{a^{(n-1)}}(b^{(n-1)})$:
\[
\begin{array}{c|cccc}
n       & 1 & 2                       & 3                     & 4                     \\ \hline
b^{(n)} & 4 & 5 - \delta_5(4) = 1     & 3 - \delta_3(1) = 2   & 2 - \delta_2(2) = 2
\end{array}
\]

\medskip
\textit{Closed formula.}
The terms of the sum are
\[
\frac{b^{(1)}}{a^{(0)}a^{(1)}} = \frac{4}{65},
\quad
\frac{b^{(2)}}{a^{(1)}a^{(2)}} = \frac{1}{15},
\quad
\frac{b^{(3)}}{a^{(2)}a^{(3)}} = \frac{2}{6} = \frac{1}{3},
\quad
\frac{b^{(4)}}{a^{(3)}a^{(4)}} = \frac{2}{2} = 1.
\]
Using common denominator $195 = 3 \cdot 5 \cdot 13$:
\[
13 \cdot \frac{12 + 13 + 65 + 195}{195}
= 13 \cdot \frac{285}{195}
= 13 \cdot \frac{19}{13}
= 19.
\]
Hence
\[
x = \delta_{13}(19) = 19 - 13 = 6.
\]

\medskip
\textit{Verification.}\quad
$\delta_{13}(5 \cdot 6) = \delta_{13}(30) = 30 - 2\cdot 13 = 4.$\quad\checkmark
\end{example}

\section{Main Results}
\label{sec:main}

The case of a general affine equation $\delta_k(a_0 x + b) = c$
reduces to $\delta_k(a_0 x) = \delta_k(c-b)$ via additive
compatibility~\eqref{eq:addcomp}, and is applied below to
solve $\delta_p(6x+1)=0$ and $\delta_p(6x+5)=0$.

We derive explicit closed formulas for $g(2n,p)$. The coprimality
condition $\gcd(h,6p)=1$ requires $h$ to avoid the prime divisors
$2$, $3$, and $p$.

Integers coprime with $6$ are precisely those satisfying
$\delta_6(h) \in \{1,5\}$,
so every admissible integer $h$ can be written in one of the forms
\[
h = 6x+1 \qquad\text{or}\qquad h = 6x+5.
\]
The condition $p \mid h$ corresponds exactly to
\[
\delta_p(6x+1)=0 \qquad\text{and}\qquad \delta_p(6x+5)=0.
\]
The minimal solutions of these equations determine the residue
positions that must be excluded when counting integers coprime with
$6p$.

\begin{lemma}\label{lem:delta6_prime}
For every prime $p \ge 5$, $\,\delta_6(p) \in \{1,5\}$.
\end{lemma}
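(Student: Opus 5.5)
We rule out every residue other than $1$ and $5$ by exhibiting a proper divisor of $p$. Since $p \ge 5$ and $\delta_6(p) \in [0,5]_{\mathbb{Z}}$, the possible values are $0,1,2,3,4,5$. We show that $\delta_6(p)\in\{0,2,3,4\}$ forces $2\mid p$ or $3\mid p$, which is impossible for a prime $p\ge 5$.

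Write $p = 6q + \delta_6(p)$ with $q = \lfloor p/6 \rfloor$, straight from the definition of $\delta_6$. The case analysis runs as follows.
\begin{itemize}
\item If $\delta_6(p) \in \{0,2,4\}$, then $p = 2(3q + \delta_6(p)/2)$, so $2 \mid p$. Equivalently, Lemma~\ref{lem:conmuta1} with $r=2$ and $k=6$ gives $\delta_2(p)=\delta_2(\delta_6(p))=0$.
\item If $\delta_6(p) \in \{0,3\}$, then $3 \mid p$ in the same way, using Lemma~\ref{lem:conmuta1} with $r=3$.
\end{itemize}
In each case $p$ has a divisor $d\in\{2,3\}$ with $1<d<p$, because $p\ge5>3$. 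This contradicts the primality of $p$, so only $\delta_6(p)\in\{1,5\}$ remains.

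There is no real obstacle here. The only point needing care is to invoke $p \ge 5$, which guarantees $p\neq 2,3$, so that divisibility by $2$ or $3$ genuinely contradicts primality. The appeal to Lemma~\ref{lem:conmuta1} keeps the argument within the paper's $\delta$-operator formalism.
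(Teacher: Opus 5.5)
Your proof is correct and is essentially the paper's argument. The paper notes that $\gcd(p,6)=1$ and that only $1$ and $5$ in $\{0,\dots,5\}$ are coprime to $6$; you unpack that same step by checking directly that each residue in $\{0,2,3,4\}$ forces $2\mid p$ or $3\mid p$.
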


\begin{proof}
Since $p \ge 5$ is prime, $p$ is not divisible by $2$ or $3$,
hence $\gcd(p,6)=1$.
The elements of $\{0,1,2,3,4,5\}$ that are coprime with $6$ are
precisely $1$ and $5$, so $\delta_6(p) \in \{1,5\}$.
\end{proof}

\begin{lemma}\label{lema:simet}
Let $p \ge 2$, $\,a,b,c \in \{0,\dots,p-1\}$ with $\gcd(a,p)=1$
and $b+c=a$ in $\mathbb{Z}$.
Let $x_0$ and $x_0'$ denote the minimal solutions of
\[
\delta_p(ax+b)=0 \qquad\text{and}\qquad \delta_p(ax+c)=0,
\]
respectively. Then
\[
x_0 + x_0' + 1 = p.
\]
\end{lemma}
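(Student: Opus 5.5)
The plan is a direct verification: I will guess the solution of the second equation explicitly and then invoke uniqueness. Set
\[
x' := p-1-x_0 .
\]
Since $x_0$ is the minimal solution of $\delta_p(ax+b)=0$, Definition~\ref{def:minima} gives $x_0\in\{0,\dots,p-1\}$. Hence $x'\in\{0,\dots,p-1\}$ as well.

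The main computation uses the hypothesis $b+c=a$ in $\mathbb{Z}$:
\[
ax'+c \;=\; ap-a-ax_0+c \;=\; ap-(ax_0+b)+(b+c-a) \;=\; ap-(ax_0+b).
\]
By definition of $x_0$, $p\mid ax_0+b$, and trivially $p\mid ap$. So $p\mid ax'+c$, that is, $\delta_p(ax'+c)=0$. If one prefers to stay in the paper's $\delta$-language, the same conclusion follows from \eqref{eq:addcomp} and \eqref{eq:period}:
\[
\delta_p(ax'+c)=\delta_p\bigl(-(ax_0+b)\bigr)=\delta_p\bigl(-\delta_p(ax_0+b)\bigr)=\delta_p(0)=0 .
\]

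Next, because $\gcd(a,p)=1$, Lemma~\ref{lema:unicidad_ep} applies to the equation $\delta_p(ax+c)=0$. It says this equation has exactly one solution in $\{0,\dots,p-1\}$, and that solution is the minimal solution $x_0'$. Since $x'$ lies in this range and solves the equation, $x'=x_0'$. Therefore $x_0+x_0'+1=p$.

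There is no real obstacle here. The only points needing care are checking that $x'$ lands in $\{0,\dots,p-1\}$, which is why $x_0\le p-1$ matters, and applying the uniqueness lemma with $c$ in the role of $b$ and $0$ in the role of $c$. The hypothesis $\gcd(a,p)=1$ is used only at that step; note that it also forces $a\neq 0$, though the argument does not need this separately.
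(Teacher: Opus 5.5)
Your proof is correct, but it runs differently from the paper's. The paper adds the two defining relations to get $\delta_p\bigl(a(x_0+x_0'+1)\bigr)=0$. It then cancels $a$ using $\gcd(a,p)=1$ to obtain $p\mid x_0+x_0'+1$, and pins down the value from $x_0+x_0'+1\in[1,2p-1]_{\mathbb{Z}}$. You instead exhibit the reflected candidate $p-1-x_0$, check that it lies in $\{0,\dots,p-1\}$ and solves the second equation, and conclude by the uniqueness statement of Lemma~\ref{lema:unicidad_ep}.

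Both arguments rest on the same two facts: the identity $b+c=a$ and the invertibility of $a$ modulo $p$. Your version routes the invertibility through a lemma whose statement is exactly what is needed. The paper justifies its cancellation step by citing Lemma~\ref{lema_pote}, whose statement ($\delta_k(ar)=\delta_k(r)\iff\delta_k(a)=1$) does not literally cover ``$\delta_p(ay)=0\Rightarrow\delta_p(y)=0$'' without a small extra manipulation. Your route also yields $x_0'=p-1-x_0$ directly, in the form used in Lemma~\ref{thm:sol_min6_funcional_5}. The paper's argument is symmetric in $x_0,x_0'$ and replaces your range check on $p-1-x_0$ by a range check on the sum.

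One small slip in your optional $\delta$-language aside: the step $\delta_p\bigl(-(ax_0+b)\bigr)=\delta_p\bigl(-\delta_p(ax_0+b)\bigr)$ is \eqref{eq:multcomp} with $r=-1$, not \eqref{eq:addcomp}. Your main divisibility argument is unaffected.
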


\begin{remark}
The hypothesis $b+c=a$ in $\mathbb{Z}$, together with
$a,b,c \in \{0,\dots,p-1\}$, implies $b+c < p$.
This is satisfied in all applications below, where
$(a,b,c)=(6,1,5)$ and $p \ge 5$.
\end{remark}

\begin{proof}
By hypothesis,
\[
\delta_p(ax_0+b)=0, \qquad \delta_p(ax_0'+c)=0.
\]
By additive compatibility~\eqref{eq:addcomp} and the hypothesis
$b+c=a$,
\[
\delta_p\bigl(a(x_0+x_0'+1)\bigr)
= \delta_p\bigl((ax_0+b)+(ax_0'+c)\bigr)
= \delta_p\bigl(\delta_p(ax_0+b)+\delta_p(ax_0'+c)\bigr)
= \delta_p(0+0)
= 0.
\]
Since $\gcd(a,p)=1$, Lemma~\ref{lema_pote} gives
$\delta_p(x_0+x_0'+1)=0$.
Because $x_0,x_0' \in \{0,\dots,p-1\}$,
\[
x_0+x_0'+1 \in [1,2p-1]_{\mathbb{Z}},
\]
and the only multiple of $p$ in this interval is $p$ itself,
so $x_0+x_0'+1=p$.
\end{proof}

\begin{lemma}\label{thm:sol_min6_funcional_1}
Let $p \ge 5$ be prime. The minimal solution of
\[
\delta_p(6x+1)=0
\]
is
\begin{equation}\label{eq_sola(p)}
a(p)
=
\frac{p-1}{6}\,M(p)
+
\frac{5p-1}{6}\,(1-M(p)),
\qquad
M(p):=\frac{5-\delta_6(p)}{4}.
\end{equation}
By Lemma~\ref{lem:delta6_prime}, $M(p)$ takes only integer values:
$M(p)=1$ if $\delta_6(p)=1$, and $M(p)=0$ if $\delta_6(p)=5$.
Equivalently,
\[
a(p)=\frac{p-1}{6}
\quad\text{if } \, \delta_6(p)=1,
\qquad
a(p)=\frac{5p-1}{6}
\quad\text{if } \, \delta_6(p)=5.
\]
\end{lemma}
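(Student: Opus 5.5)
The argument is a direct verification followed by an appeal to uniqueness. Since $\gcd(6,p)=1$ for a prime $p\ge 5$, Lemma~\ref{lema:unicidad_ep} (with $k=p$, $a=6$, $b=1$, $c=0$) tells us that $\delta_p(6x+1)=0$ has exactly one solution in $\{0,\dots,p-1\}$, and that this solution is the minimal one. It therefore suffices to exhibit an integer in $\{0,\dots,p-1\}$ satisfying the equation. We do not need to run the Euclidean construction of Theorem~\ref{thm:min_sol_afrc}; a candidate can simply be checked.

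By Lemma~\ref{lem:delta6_prime}, $\delta_6(p)\in\{1,5\}$, so we split into two cases. In each case we first confirm that $M(p)$ equals $1$ or $0$ as claimed, which is immediate from $M(p)=(5-\delta_6(p))/4$, so that formula~\eqref{eq_sola(p)} collapses to the stated expression.

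\emph{Case $\delta_6(p)=1$.} Here $6\mid p-1$, so $a(p)=(p-1)/6$ is an integer. It satisfies $0\le a(p)\le p-1$, and $6a(p)+1=p$, hence $\delta_p(6a(p)+1)=0$.

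\emph{Case $\delta_6(p)=5$.} Here $5p\equiv 25\equiv 1 \pmod 6$, so $6\mid 5p-1$ and $a(p)=(5p-1)/6$ is an integer. Since $5p-1<6p$, it lies in $\{0,\dots,p-1\}$. Moreover $6a(p)+1=5p$, so $\delta_p(6a(p)+1)=0$.

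In both cases uniqueness then forces $a(p)$ to be the minimal solution. There is no real obstacle here. The only points needing care are the integrality of $(5p-1)/6$, which is the computation $\delta_6(5p)=\delta_6(5\cdot 5)=1$ via~\eqref{eq:multcomp}, and the range check $a(p)\le p-1$, which is what allows Lemma~\ref{lema:unicidad_ep} to be applied.
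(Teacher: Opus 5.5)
Your proof is correct and is essentially the paper's argument: both use Lemma~\ref{lema:unicidad_ep} for uniqueness, split on $\delta_6(p)\in\{1,5\}$, and rest on the identity $6x+1=px_2$ with $x_2=1$ or $x_2=5$, plus the range check $x\le p-1$. The only difference is that the paper derives $x_2$ by reducing $px_2-1$ modulo $6$, whereas you verify the candidate directly and make the integrality of $(5p-1)/6$ explicit. That is all that is needed, since in both versions minimality comes from uniqueness.
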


\begin{proof}
Since $\gcd(6,p)=1$, by Lemma~\ref{lema:unicidad_ep} the equation
admits a unique solution in $\{0,\dots,p-1\}$.

The condition $\delta_p(6x+1)=0$ is equivalent to $6x+1=px_2$
for some integer $x_2$, which gives $6x = px_2-1$.
Applying $\delta_6$ to both sides, using additive and multiplicative
compatibility~\eqref{eq:addcomp},~\eqref{eq:multcomp}, and
Lemma~\ref{lema:cp2} to obtain $\delta_6(-1)=6-1=5$,
\[
0 = \delta_6(6x) = \delta_6(px_2-1)
= \delta_6\!\left(\delta_6(p)\,x_2 + 5\right).
\]

\textit{Case $\delta_6(p)=1$}:
\[
\delta_6(x_2+5)=0,
\]
whose smallest nonnegative solution is $x_2=1$, giving
\[
x = \frac{p-1}{6}.
\]

\textit{Case $\delta_6(p)=5$}:
\[
\delta_6(5x_2+5)=\delta_6(5(x_2+1))=0.
\]
Since $\gcd(5,6)=1$, this reduces to $\delta_6(x_2+1)=0$,
whose smallest nonnegative solution is $x_2=5$, giving
\[
x = \frac{5p-1}{6}.
\]

In both cases we verify $x \in \{0,\dots,p-1\}$:
for Case~1, $0 < (p-1)/6 < p$;
for Case~2, $(5p-1)/6 < p$ since $5p-1 < 6p$.
By Lemma~\ref{lema:unicidad_ep}, each is the unique, hence minimal,
solution.

The selector $M(p)$ encodes both cases, yielding formula
\eqref{eq_sola(p)}.
\end{proof}

\begin{lemma}\label{thm:sol_min6_funcional_5}
Let $p \ge 5$ be prime. The minimal solution of
\[
\delta_p(6x+5)=0
\]
is
\[
b(p) = p-1-a(p).
\]
\end{lemma}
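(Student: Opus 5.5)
This is a direct application of Lemma~\ref{lema:simet} with $(a,b,c)=(6,1,5)$ and modulus the prime $p\ge 5$. I would first check its hypotheses. All three numbers $6,1,5$ lie in $\{0,\dots,p-1\}$ only when $p\ge 7$. For $p=5$ we have $6\notin\{0,\dots,4\}$, so that case needs a separate remark. Next, $\gcd(6,p)=1$ by Lemma~\ref{lem:delta6_prime}, and $1+5=6$ holds in $\mathbb{Z}$. Lemma~\ref{lema:unicidad_ep} guarantees that both equations $\delta_p(6x+1)=0$ and $\delta_p(6x+5)=0$ have unique solutions in $\{0,\dots,p-1\}$. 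These are the minimal solutions $a(p)$ (from Lemma~\ref{thm:sol_min6_funcional_1}) and $b(p)$.

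For $p\ge 7$, Lemma~\ref{lema:simet} then gives $a(p)+b(p)+1=p$, that is, $b(p)=p-1-a(p)$.

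To avoid the range issue altogether, I would instead give the direct argument used in the proof of Lemma~\ref{lema:simet}, which never uses $a<p$.

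\emph{Step 1.} Set $y:=p-1-a(p)$. Since $a(p)\in\{0,\dots,p-1\}$, we have $y\in\{0,\dots,p-1\}$.

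\emph{Step 2.} Compute
\[
6y+5 \;=\; 6p-6a(p)-1 \;=\; 6p-\bigl(6a(p)+1\bigr).
\]
Because $p\mid 6a(p)+1$, it follows that $p\mid 6y+5$, so $\delta_p(6y+5)=0$ by~\eqref{eq:period} and Lemma~\ref{lema:despeje}.

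\emph{Step 3.} By the uniqueness in Lemma~\ref{lema:unicidad_ep}, $y$ is the minimal solution, so $b(p)=y$.

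As a sanity check against Lemma~\ref{thm:sol_min6_funcional_1}, the formula gives $b(p)=\frac{5p-5}{6}$ when $\delta_6(p)=1$ and $b(p)=\frac{p-5}{6}$ when $\delta_6(p)=5$. Both are integers in range, and in each case $6b(p)+5$ equals $5p$ or $p$ respectively.

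The only real obstacle is the $p=5$ edge case in the hypotheses of Lemma~\ref{lema:simet}, and the direct computation above handles it uniformly. For $p=5$ it gives $a(5)=4$ and $b(5)=0$, and indeed $6\cdot 0+5=5$.
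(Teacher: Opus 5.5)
Your proposal is correct. Its final argument differs mildly from the paper's.

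\textbf{The paper's route.} The proof is a one-line invocation of Lemma~\ref{lema:simet} with $(a,b,c)=(6,1,5)$. That lemma starts from both minimal solutions $x_0=a(p)$ and $x_0'=b(p)$. It shows $p \mid 6(x_0+x_0'+1)$, cancels the unit $6$, and uses $x_0+x_0'+1\in[1,2p-1]_{\mathbb{Z}}$ to force $x_0+x_0'+1=p$.

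\textbf{Your route.} You construct the candidate $y=p-1-a(p)$ and check that $6y+5=6p-(6a(p)+1)$ is divisible by $p$. You then conclude via the uniqueness in Lemma~\ref{lema:unicidad_ep}. This is a verify-then-uniqueness argument rather than a sum-and-bound argument. Both rest on the same identity $(6x+1)+(6x'+5)=6(x+x'+1)$.

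\textbf{The case $p=5$.} Your remark is accurate. Lemma~\ref{lema:simet} as stated requires $a,b,c\in\{0,\dots,p-1\}$, which fails for $(6,1,5)$ when $p=5$. The Remark following that lemma overlooks this: there $b+c=6>p$. The gap is only formal. The proof of Lemma~\ref{lema:simet} uses only $\gcd(a,p)=1$, $b+c=a$, and $x_0,x_0'\in\{0,\dots,p-1\}$, never $a<p$. Still, your direct argument covers $p=5$ uniformly without reopening that proof.

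\textbf{What each buys.} The paper's route gives a general symmetry statement for complementary offsets, with no solution known in advance. Yours is more elementary and self-contained for this lemma, since it leans on the already-established fact $p\mid 6a(p)+1$.
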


\begin{proof}
Apply Lemma~\ref{lema:simet} with $a=6$, $b=1$, $c=5$.
The hypothesis $b+c=a$ holds since $1+5=6$ in $\mathbb{Z}$,
and $\gcd(6,p)=1$ since $p\ge 5$ is prime
(Lemma~\ref{lem:delta6_prime}).
Lemma~\ref{lema:simet} gives $x_0+x_0'+1=p$,
i.e.\ $b(p) = p-1-a(p)$.
\end{proof}

\subsection{Counting pairs with sum constraints}
\label{subsec:counting}

Throughout this subsection, $p \ge 2$ is fixed and
$s,t,r \in \{0,\dots,p-1\}$.
We define the indicator functions
\begin{equation}\label{eq:H_D_def}
H(x)
=
\begin{cases}
1 & x \ge 0,\\
0 & x < 0,
\end{cases}
\qquad
\mathbf{1}(x)
=
\begin{cases}
1 & x=0,\\
0 & x\ne 0.
\end{cases}
\end{equation}

\begin{lemma}\label{lem:conteo_excluyendo_a_general}
Let $a,r \in \{0,\dots,p-1\}$.
The number of pairs
$(s,t) \in (\{0,\dots,p-1\}\setminus\{a\})^2$
with $s+t=r$ and $s \le t$ is
\[
\kappa(r,a)
=
\left\lfloor\frac{r}{2}\right\rfloor+1-H(r-a).
\]
\end{lemma}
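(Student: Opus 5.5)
The plan is to count first without the exclusion and then remove exactly the pairs that involve the forbidden value $a$, showing there are $H(r-a)$ of them. Because $s+t=r$ is an equation in $\mathbb{Z}$ (not modulo $p$) and $s,t\ge 0$, both coordinates satisfy $0\le s,t\le r\le p-1$. So the range constraint $\{0,\dots,p-1\}$ is automatically met once $s,t\ge 0$ and $s+t=r$.

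\textbf{Step 1: unrestricted count.} Count pairs $(s,t)\in\{0,\dots,p-1\}^2$ with $s+t=r$ and $s\le t$. Each such pair is determined by $s$, since $t=r-s$. The condition $s\le t$ is equivalent to $2s\le r$, that is, $s\le\lfloor r/2\rfloor$. As $s$ ranges over $[0,\lfloor r/2\rfloor]_{\mathbb{Z}}$, the value $t=r-s$ lies in $[0,r]_{\mathbb{Z}}\subseteq\{0,\dots,p-1\}$. This gives $\lfloor r/2\rfloor+1$ pairs.

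\textbf{Step 2: pairs involving $a$.} We next show that the number of these pairs with $s=a$ or $t=a$ is exactly $H(r-a)$. Split into two cases.
\begin{itemize}
\item If $a>r$, no pair qualifies, since both coordinates are at most $r$.
\item If $a\le r$, then $r-a\in[0,r]_{\mathbb{Z}}$. The unordered pair $\{a,r-a\}$, ordered increasingly, is one of the pairs counted in Step~1. Any pair containing $a$ must have the other entry equal to $r-a$. With the ordering $s\le t$ imposed, this yields exactly one pair. This includes the degenerate case $a=r-a$, where the pair $(a,a)$ is counted once, not twice.
\end{itemize}
Subtracting gives $\kappa(r,a)=\lfloor r/2\rfloor+1-H(r-a)$, which is the claimed formula.

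\textbf{Main obstacle.} The only delicate point is the degenerate case $2a=r$ in Step~2. A naive inclusion--exclusion, subtracting the pairs with $s=a$ and those with $t=a$ separately, would double-subtract there. The argument therefore works with the single ordered pair $(\min(a,r-a),\max(a,r-a))$ to avoid this. Everything else is direct bookkeeping, and none of the earlier lemmas on $\delta_k$ is needed.
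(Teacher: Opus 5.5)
Your proof is correct and takes essentially the paper's route. You count the $\lfloor r/2\rfloor+1$ pairs $(s,r-s)$ with $0\le s\le\lfloor r/2\rfloor$, then show exactly $H(r-a)$ of them contain $a$: none if $a>r$, and only $\bigl(\min(a,r-a),\max(a,r-a)\bigr)$ if $a\le r$. The paper splits $a\le\lfloor r/2\rfloor$ versus $a>\lfloor r/2\rfloor$ instead of using $\min/\max$, which is the same argument, and your explicit handling of the case $2a=r$ is, if anything, cleaner.
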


\begin{proof}
Every solution has the form $(s, r-s)$ with $0 \le s \le r-s$,
i.e.\ $0 \le s \le \lfloor r/2 \rfloor$, giving
$\lfloor r/2 \rfloor + 1$ pairs in total.

We now count how many of these contain $a$ in at least one component.
If $a > r$, then neither $s = a$ nor $t = r-s = a$ is possible for
any admissible $s \in \{0,\dots,\lfloor r/2\rfloor\}$, so no pair is
excluded; this is encoded by $H(r-a) = 0$.

If $a \le r$, then $s = a$ gives the pair $(a, r-a)$, which is
admissible since $a \le r-a \iff a \le r/2$ requires checking:
if $a \le \lfloor r/2 \rfloor$ then $(a,r-a)$ satisfies $s \le t$
and is the unique excluded pair;
if $a > \lfloor r/2 \rfloor$ (and $a \le r$) then $s = r-a$ gives
the pair $(r-a, a)$, again the unique excluded pair.
In both subcases exactly one pair is excluded; this is encoded by
$H(r-a) = 1$.

Therefore $\kappa(r,a) = \lfloor r/2 \rfloor + 1 - H(r-a)$.
\end{proof}

We define the diagonal indicator
\[
\tau(r,a)
=
(1-\delta_2(r))\,\mathbf{1}(2a-r),
\]
which equals $1$ precisely when $r$ is even and $r = 2a$,
i.e.\ when the diagonal pair $(a,a)$ would be present.
Define the vector
\[
\bm{\nu}(r,a)
=
\bigl(\kappa(r,a)-\tau(r,a),\;\tau(r,a)\bigr).
\]
Its first component counts strictly ordered pairs $(s,t)$ with
$s < t$, and its second flags the presence of the diagonal pair.

Since $r \in \{0,\dots,p-1\}$, we have $p+r \in \{p,\dots,2p-1\}$.
Hence the constraints $t=p+r-s \in \{0,\dots,p-1\}$ and $s \in \{0,\dots,p-1\}$
force $s$ to range over a contiguous interval.

\begin{lemma}\label{lem:conteo_p_r}
Let $a,r \in \{0,\dots,p-1\}$.
The number of pairs
$(s,t) \in (\{0,\dots,p-1\}\setminus\{a\})^2$
with $s+t=p+r$ and $s \le t$ is
\[
\bar{\kappa}(r,a,p)
=
\left\lfloor\frac{p-r}{2}\right\rfloor - H(a-r-1).
\]
\end{lemma}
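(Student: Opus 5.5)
The plan is to mirror the proof of Lemma~\ref{lem:conteo_excluyendo_a_general}. First we count all pairs with $s+t=p+r$, $s\le t$ and $s,t\in\{0,\dots,p-1\}$. Then we show that excluding the value $a$ removes exactly one of these pairs when $a\ge r+1$, and none otherwise.

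\emph{Total count.} Write $t=p+r-s$. The condition $t\le p-1$ is equivalent to $s\ge r+1$. The condition $t\ge 0$ is automatic, because $s\le p-1<p+r$. The condition $s\le t$ is equivalent to $2s\le p+r$, that is, $s\le\lfloor (p+r)/2\rfloor$. Finally, $s\le p-1$ is implied by this, since $\lfloor (p+r)/2\rfloor\le p-1$ whenever $r\le p-1$ (for $r=p-1$ this reads $\lfloor(2p-1)/2\rfloor=p-1$). So $s$ ranges over the contiguous interval $[r+1,\lfloor (p+r)/2\rfloor]_{\mathbb{Z}}$, as anticipated in the remark before the statement. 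Its cardinality is
\[
\left\lfloor\tfrac{p+r}{2}\right\rfloor-r=\left\lfloor\tfrac{p-r}{2}\right\rfloor ,
\]
using $\lfloor (p+r)/2\rfloor - r=\lfloor (p+r-2r)/2\rfloor$. This count is nonnegative, and it equals $0$ exactly when $r=p-1$.

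\emph{Excluded pairs.} A pair in the range above contains $a$ only if $a=s$ or $a=t$. Since every admissible pair has both components in $[r+1,p-1]_{\mathbb{Z}}$ (because $t\ge s\ge r+1$), no pair contains $a$ when $a\le r$. This case is encoded by $H(a-r-1)=0$.

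If $a\ge r+1$, then $a':=p+r-a$ satisfies $r+1\le a'\le p-1$. Hence the unordered pair $\{a,a'\}$, written as $(\min(a,a'),\max(a,a'))$, is an admissible pair containing $a$. It is the only one, because the value of one component determines the other through $s+t=p+r$. This holds also in the diagonal case $a=a'$, where it is still a single pair. This case is encoded by $H(a-r-1)=1$.

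Subtracting the excluded pair from the total count gives $\bar\kappa(r,a,p)=\lfloor (p-r)/2\rfloor-H(a-r-1)$.

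The only delicate step is checking that $a\ge r+1$ really produces an admissible pair, which requires the partner $a'$ to lie in $[0,p-1]$. The bounds $a\le p-1$ and $a\ge r+1$ give exactly $r+1\le a'\le p-1$. The remaining boundary cases, $r=p-1$ (where the interval is empty and $H(a-p)=0$) and $a=a'$, are checked directly.
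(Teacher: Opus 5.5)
Your proof is correct and follows the same route as the paper: you count $s \in [r+1,\lfloor (p+r)/2\rfloor]_{\mathbb{Z}}$ to get $\lfloor (p-r)/2\rfloor$, then split on $a\le r$ versus $a\ge r+1$ to show that exactly zero or one pair is removed. Your treatment of the excluded pair as $(\min(a,a'),\max(a,a'))$, including uniqueness and the diagonal case $a=a'$, is more careful than the paper's, which only exhibits $(a,p+r-a)$ and never checks that it satisfies $s\le t$.
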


\begin{proof}
From $t = p+r-s$ and $t \in \{0,\dots,p-1\}$, we need
$p+r-s \le p-1$, i.e.\ $s \ge r+1$.
Combined with $s \in \{0,\dots,p-1\}$, the admissible values of $s$
are $s \in \{r+1,\dots,p-1\}$.
The condition $s \le t = p+r-s$ gives $s \le (p+r)/2$, so
\[
s \in \left\{r+1,\dots,\left\lfloor\frac{p+r}{2}\right\rfloor\right\},
\]
which contains
\[
\left\lfloor\frac{p+r}{2}\right\rfloor - r
= \left\lfloor\frac{p-r}{2}\right\rfloor
\]
integers.

We now exclude pairs containing $a$.
If $a \le r$, then $a < r+1$, so $a$ does not appear as $s$ in
any admissible pair, and $t = p+r-a > p-1$ so $a$ cannot appear
as $t$ either; thus $H(a-r-1) = 0$ and no pair is excluded.

If $a \ge r+1$, then $s = a$ gives the admissible pair
$(a, p+r-a)$ with $t = p+r-a \in \{0,\dots,p-1\}$,
which must be excluded; this is encoded by $H(a-r-1) = 1$.

Therefore $\bar{\kappa}(r,a,p) =
\lfloor(p-r)/2\rfloor - H(a-r-1)$.
\end{proof}

Analogously to $\bm{\nu}$, define the vector
\[
\bar{\bm{\nu}}(r,a,p)
=
\bigl(\bar{\kappa}(r,a,p) - \tau(p-r,\,a-r),\;
      \tau(p-r,\,a-r)\bigr),
\]
which decomposes the count into strictly ordered and diagonal
contributions.

\begin{lemma}\label{lem:lambda}
Let $a,b,r \in \{0,\dots,p-1\}$.
The number of pairs
\[
(s,t)
\in
(\{0,\dots,p-1\}\setminus\{a\})
\times
(\{0,\dots,p-1\}\setminus\{b\})
\]
with $s+t=r$ is
\[
\lambda(r,a,b)
=
r+1-H(r-a)-H(r-b)+\mathbf{1}(a+b-r).
\]
\end{lemma}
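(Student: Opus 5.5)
Here the pairs are ordered, with no constraint $s\le t$, and $s$ and $t$ avoid different values. The plan is to count by inclusion--exclusion over the two excluded values, parametrizing every pair by its first coordinate.

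\emph{Total count.} Without exclusions, the pairs $(s,t)\in\{0,\dots,p-1\}^2$ with $s+t=r$ are exactly $(s,r-s)$ for $s\in[0,r]_{\mathbb{Z}}$. This works because $r\le p-1$ forces $t=r-s\in[0,r]_{\mathbb{Z}}\subseteq\{0,\dots,p-1\}$ automatically. So there are $r+1$ pairs.

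\emph{Pairs with $s=a$.} The set $A$ of pairs with $s=a$ is nonempty exactly when $a\le r$, in which case it consists of the single pair $(a,r-a)$. Hence $|A|=H(r-a)$.

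\emph{Pairs with $t=b$.} The set $B$ of pairs with $t=b$ is the single pair $(r-b,b)$ when $b\le r$ and is empty otherwise. Hence $|B|=H(r-b)$.

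\emph{Intersection.} The set $A\cap B$ consists of pairs with $s=a$ and $t=b$. It is nonempty iff $a+b=r$, and then it is the single pair $(a,b)$. In that case $a\le r$ and $b\le r$ hold automatically because $a,b\ge 0$, so the pair is genuinely counted in both $A$ and $B$. Thus $|A\cap B|=\mathbf{1}(a+b-r)$.

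Inclusion--exclusion now gives
\[
\lambda(r,a,b)=(r+1)-|A|-|B|+|A\cap B|,
\]
which is the stated formula.

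The only step needing care is the intersection term. One must check that whenever $a+b=r$, the pair $(a,b)$ is removed twice by the Heaviside terms and so must be added back exactly once. One must also check that no spurious correction arises when $a+b\neq r$. Both follow from the nonnegativity of $a$ and $b$ noted above. Unlike Lemmas~\ref{lem:conteo_excluyendo_a_general} and~\ref{lem:conteo_p_r}, there is no ordering condition, so no diagonal or floor adjustments appear.
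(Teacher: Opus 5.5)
Your proof is correct and matches the paper's argument: the total count $r+1$, the sets $A$ (pairs with $s=a$) and $B$ (pairs with $t=b$) with $|A|=H(r-a)$ and $|B|=H(r-b)$, and inclusion--exclusion with $|A\cap B|=\mathbf{1}(a+b-r)$. Your extra check that $a+b=r$ forces $a,b\le r$, so that the pair $(a,b)$ really lies in both $A$ and $B$, is a small detail the paper leaves implicit.
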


\begin{proof}
Every solution of $s+t=r$ with $s,t \in \{0,\dots,p-1\}$ has the
form $(s,r-s)$ with $s \in \{0,\dots,r\}$, giving $r+1$ pairs in
total.

Define
\[
A = \{(s,t): s+t=r,\; s=a\},
\quad
B = \{(s,t): s+t=r,\; t=b\}.
\]
Then the desired count is
$(r+1) - |A \cup B| = (r+1) - |A| - |B| + |A \cap B|$.

\textit{Counting $|A|$.}
The pair $(a, r-a)$ belongs to $A$ if and only if
$r-a \in \{0,\dots,p-1\}$, i.e.\ $a \le r$.
Hence $|A| = H(r-a)$.

\textit{Counting $|B|$.}
The pair $(r-b, b)$ belongs to $B$ if and only if
$r-b \in \{0,\dots,p-1\}$, i.e.\ $b \le r$.
Hence $|B| = H(r-b)$.

\textit{Counting $|A \cap B|$.}
A pair belongs to both $A$ and $B$ if and only if $s=a$ and $t=b$,
i.e.\ $a+b=r$. Hence
$|A \cap B| = \mathbf{1}(a+b-r)$.

Substituting gives
\[
\lambda(r,a,b)
= r+1-H(r-a)-H(r-b)+\mathbf{1}(a+b-r). \qedhere
\]
\end{proof}

\begin{lemma}\label{lem:lambda_p}
Let $a,b,r \in \{0,\dots,p-1\}$.
The number of pairs
\[
(s,t)
\in
(\{0,\dots,p-1\}\setminus\{a\})
\times
(\{0,\dots,p-1\}\setminus\{b\})
\]
with $s+t=p+r$ is
\[
\bar{\lambda}(r,a,b,p)
=
p-r-1-H(a-r-1)-H(b-r-1)+\mathbf{1}(a+b-p-r).
\]
\end{lemma}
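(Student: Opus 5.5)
The plan mirrors the proof of Lemma~\ref{lem:lambda}: first count all pairs with $s+t=p+r$ and $s,t\in\{0,\dots,p-1\}$, then remove the forbidden ones by inclusion--exclusion. From $t=p+r-s\le p-1$ we get $s\ge r+1$. From $t\ge 0$ we get $s\le p+r$, which is automatic since $s\le p-1$. So the admissible $s$ form the interval $\{r+1,\dots,p-1\}$, with $t$ then determined and lying in $\{r+1,\dots,p-1\}$ as well. This gives $p-1-r$ pairs in total, matching the leading term $p-r-1$ of $\bar{\lambda}$.

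Next define
\[
A=\{(s,t): s+t=p+r,\ s=a\},\qquad B=\{(s,t): s+t=p+r,\ t=b\},
\]
restricted to the admissible pairs above. The count is then $(p-r-1)-|A|-|B|+|A\cap B|$. The pair $(a,p+r-a)$ is admissible exactly when $a\ge r+1$, so $|A|=H(a-r-1)$, as in the proof of Lemma~\ref{lem:conteo_p_r}. By symmetry, $(p+r-b,b)$ is admissible exactly when $p+r-b\in\{r+1,\dots,p-1\}$, that is, $b\ge r+1$ (the upper bound $p+r-b\le p+r$ is harmless and $p+r-b\le p-1$ is the same condition). Hence $|B|=H(b-r-1)$.

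For $|A\cap B|$, a common pair must be $(a,b)$, which requires $a+b=p+r$. Conversely, if $a+b=p+r$ with $a,b\le p-1$, then $a=p+r-b\ge r+1$, so the pair $(a,b)$ is admissible. Hence $|A\cap B|=\mathbf{1}(a+b-p-r)$. Substituting the three cardinalities gives the stated formula.

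The only delicate step is checking that admissibility of $(a,p+r-a)$ reduces exactly to $a\ge r+1$, with no extra boundary condition, and likewise that $a+b=p+r$ automatically makes $(a,b)$ admissible. This mirrors the $a\le r$ versus $a\ge r+1$ dichotomy in the proof of Lemma~\ref{lem:conteo_p_r}, so I expect no real obstacle.
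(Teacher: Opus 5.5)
Your proof is correct and follows the paper's argument essentially step for step: the same total count $p-r-1$ from $s\in\{r+1,\dots,p-1\}$, the same sets $A$ and $B$, and the same inclusion--exclusion. You also explicitly check that $a+b=p+r$ forces $(a,b)$ to be admissible, which the paper leaves implicit. (One slip in your parenthetical for $|B|$: the automatic constraint is the lower bound $p+r-b\ge r+1$, i.e.\ $b\le p-1$, not ``$p+r-b\le p+r$''; the conclusion $|B|=H(b-r-1)$ is unaffected.)
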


\begin{proof}
From $t = p+r-s$ and $t \in \{0,\dots,p-1\}$, we need $s \ge r+1$,
so $s \in \{r+1,\dots,p-1\}$, giving $p-r-1$ pairs in total.

Define
\[
A = \{(s,t): s+t=p+r,\; s=a\},
\quad
B = \{(s,t): s+t=p+r,\; t=b\}.
\]
The desired count is $(p-r-1) - |A \cup B|
= (p-r-1) - |A| - |B| + |A \cap B|$.

\textit{Counting $|A|$.}
The pair $(a, p+r-a)$ is admissible when $a \ge r+1$
(so that $s=a$ lies in $\{r+1,\dots,p-1\}$) and
$p+r-a \in \{0,\dots,p-1\}$ (which holds since $a \ge r+1 > 0$).
Hence $|A| = H(a-r-1)$.

\textit{Counting $|B|$.}
Similarly, $t=b$ gives $s = p+r-b$, admissible when $b \ge r+1$.
Hence $|B| = H(b-r-1)$.

\textit{Counting $|A \cap B|$.}
Both conditions hold simultaneously iff $s=a$ and $t=b$,
i.e.\ $a+b = p+r$.
Hence $|A \cap B| = \mathbf{1}(a+b-p-r)$.

Substituting gives
\[
\bar{\lambda}(r,a,b,p)
= p-r-1-H(a-r-1)-H(b-r-1)+\mathbf{1}(a+b-p-r). \qedhere
\]
\end{proof}

\subsection{Counting coprime pairs}
\label{subsec:coprime_pairs}

Every positive integer coprime with $6$ belongs to exactly one of
\begin{equation}\label{eq:conjuntoUV}
U := \{6k+1 : k \in \mathbb{Z}_{\ge 0}\},
\qquad
V := \{6k+5 : k \in \mathbb{Z}_{\ge 0}\}.
\end{equation}
Since $U \equiv 1 \pmod{6}$ and $V \equiv 5 \pmod{6}$, we have:
if $u_1,u_2\in U$, then $u_1+u_2\equiv 2\pmod{6}$; if $v_1,v_2\in V$,
then $v_1+v_2\equiv 4\pmod{6}$; and if $u\in U$, $v\in V$, then
$u+v\equiv 0\pmod{6}$.
Therefore, when $2n$ is written as a sum of two integers coprime with $6$,
the residue $\delta_3(n)$ determines the type of decomposition:
\begin{itemize}
\item if $\delta_3(n)=0$, one summand lies in $U$ and the other in $V$,
\item if $\delta_3(n)=1$, both summands lie in $U$,
\item if $\delta_3(n)=2$, both summands lie in $V$.
\end{itemize}

Define the selector function $h:\{0,1,2\}\to\mathbb{Z}$ by
\[
h(0)=3,\qquad h(1)=1,\qquad h(2)=5.
\]
This function encodes the constant term arising in the three possible
coprime decompositions. It admits the polynomial representation
\[
h(x)=3x^2-5x+3,
\]
which allows a unified expression for the auxiliary quantity
\begin{equation}\label{eq:omegas}
m(n) := \frac{n-h(\delta_3(n))}{3},
\qquad
\omega(n,p) := \frac{m(n)-\delta_p(m(n))}{p}
= \left\lfloor\frac{m(n)}{p}\right\rfloor.
\end{equation}

\begin{lemma}\label{lem:m_integer}
For all integers $n$, the quantity $m(n)$ is an integer.
\end{lemma}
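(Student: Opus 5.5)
We have to show that $3 \mid n - h(\delta_3(n))$, which in the language of the paper means $\delta_3\bigl(n - h(\delta_3(n))\bigr) = 0$. Write $\rho := \delta_3(n) \in \{0,1,2\}$. By definition of $\delta_3$, $n = 3\lfloor n/3\rfloor + \rho$, so by \eqref{eq:period} and \eqref{eq:addcomp}
\[
\delta_3\bigl(n - h(\rho)\bigr) = \delta_3\bigl(\rho - h(\rho)\bigr).
\]
The proof therefore reduces to checking the three values of $\rho$ against the table defining $h$:
\begin{itemize}
\item for $\rho=0$, $\rho - h(\rho) = 0 - 3 = -3$;
\item for $\rho=1$, $\rho - h(\rho) = 1 - 1 = 0$;
\item for $\rho=2$, $\rho - h(\rho) = 2 - 5 = -3$.
\end{itemize}
Each of these is divisible by $3$, so $\delta_3(\rho - h(\rho)) = 0$ in every case. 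Hence $3 \mid n - h(\delta_3(n))$, and $m(n)$ is an integer.

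The only point needing care is that the polynomial form $h(x) = 3x^2 - 5x + 3$ agrees with the table on $\{0,1,2\}$, since it is the polynomial form that defines $m(n)$ in \eqref{eq:omegas}. Direct evaluation confirms this: $h(0)=3$, $h(1)=3-5+3=1$, and $h(2)=12-10+3=5$.

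A uniform alternative avoids cases. Modulo $3$, $h(x) \equiv -5x \equiv x$, so $\delta_3(h(\rho)) = \delta_3(\rho)$, and Lemma~\ref{lema:despeje} gives $\delta_3(\rho - h(\rho)) = 0$ directly.

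Note also that $n$ may be negative, which the statement allows. The identities \eqref{eq:period}–\eqref{eq:addcomp} hold for all integers, so no case for negative $n$ is needed.
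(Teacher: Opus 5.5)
Your proof is correct and follows essentially the same route as the paper: both verify $h(j)\equiv j \pmod 3$ for $j\in\{0,1,2\}$ and use $n\equiv \delta_3(n)\pmod 3$ to conclude $3\mid n-h(\delta_3(n))$. Your check that the polynomial form $h(x)=3x^2-5x+3$ agrees with the table, and the case-free remark $h(x)\equiv x \pmod 3$, are valid but inessential additions.
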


\begin{proof}
We verify that $h(j) \equiv j \pmod{3}$ for $j \in \{0,1,2\}$:
$h(0)=3\equiv 0$, $h(1)=1\equiv 1$, $h(2)=5\equiv 2 \pmod{3}$.
Since $n \equiv \delta_3(n) \pmod{3}$, it follows that
$3 \mid (n-h(\delta_3(n)))$, so $m(n) \in \mathbb{Z}$.
\end{proof}

We define the counting vectors
\begin{align}
\bm{\eta}(n,p)
&:= \bigl(\omega(n,p)+1,\;
          \lfloor\omega(n,p)/2\rfloor+1\bigr),
\label{eq:eta}\\
\overline{\bm{\eta}}(n,p)
&:= \bigl(\omega(n,p),\;
          \lfloor(\omega(n,p)-1)/2\rfloor+1\bigr),
\label{eq:etabar}
\end{align}
and recall the residue vectors $\bm{\nu}(r,a)$ and
$\overline{\bm{\nu}}(r,a,p)$ defined in
Section~\ref{subsec:counting}.
We define the scalar product
$(x_1,x_2)\cdot(y_1,y_2) := x_1y_1+x_2y_2$.

\begin{lemma}\label{lema:conteo_soluciones}
Let $2n$ be a positive even integer with $\delta_3(n)\in\{1,2\}$,
and let $p \ge 5$ be prime. Define
\[
f(n,p) :=
\begin{cases}
a(p) & \text{if } \delta_3(n)=1,\\
b(p) & \text{if } \delta_3(n)=2.
\end{cases}
\]
The number of pairs $(h,k) \in \mathbb{Z}_{>0}^2$ with
$h+k=2n$, $h \le k$, $\gcd(h,6p)=\gcd(k,6p)=1$ is
\begin{equation}\label{eq:Q}
Q(n,p)
=
\bm{\eta}(m(n),p)
\cdot
\bm{\nu}\!\bigl(\delta_p(m(n)),\,f(n,p)\bigr)
+
\overline{\bm{\eta}}(m(n),p)
\cdot
\overline{\bm{\nu}}\!\bigl(\delta_p(m(n)),\,f(n,p),\,p\bigr).
\end{equation}
\end{lemma}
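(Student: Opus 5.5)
We reduce to a lattice-point count modulo $p$. Take the case $\delta_3(n)=1$. By the discussion following \eqref{eq:conjuntoUV}, both summands lie in $U$, so $h=6x+1$ and $k=6y+1$ with $x,y\in\mathbb{Z}_{\ge 0}$. Then $h+k=2n$ is equivalent to $x+y=(n-1)/3=m(n)$, since $h(1)=1$. The case $\delta_3(n)=2$ is identical with $V$: here $h=6x+5$, $k=6y+5$, and $x+y=(n-5)/3=m(n)$, since $h(2)=5$. In both cases $h\le k$ iff $x\le y$. By Lemmas~\ref{thm:sol_min6_funcional_1} and~\ref{thm:sol_min6_funcional_5} together with Lemma~\ref{lema:unicidad_ep}, $p\mid h$ iff $\delta_p(x)=f(n,p)$, and likewise for $k$ and $y$. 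So $Q(n,p)$ is the number of pairs $0\le x\le y$ with $x+y=m$ and $\delta_p(x),\delta_p(y)\ne f$, where $m:=m(n)$ and $f:=f(n,p)$. Throughout we read $\omega$ in $\bm\eta(m(n),p)$ as $\omega=\lfloor m/p\rfloor$, and set $r=\delta_p(m)$, so $m=p\omega+r$.

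Next, write $x=pq_1+s$ and $y=pq_2+t$ with $s,t\in\{0,\dots,p-1\}$ and $q_i\ge 0$. Then $s+t\equiv r\pmod p$ and $0\le s+t\le 2p-2$, so $s+t\in\{r,\,p+r\}$. If $s+t=r$ then $q_1+q_2=\omega$; if $s+t=p+r$ then $q_1+q_2=\omega-1$. The exclusion condition depends only on $(s,t)$.

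The ordering $x\le y$ is handled by counting unordered pairs $\{x,y\}$, choosing the residue labels so that $s\le t$.
\begin{itemize}
\item If $s<t$, then every ordered choice $(q_1,q_2)$ with the prescribed sum gives a distinct unordered pair, and exactly one of $x\le y$, $y\le x$ holds. This yields $\omega+1$ choices when $s+t=r$, and $\omega$ choices when $s+t=p+r$.
\item If $s=t$, then $x\le y$ iff $q_1\le q_2$. This gives $\lfloor\omega/2\rfloor+1$ choices, respectively $\lfloor(\omega-1)/2\rfloor+1$.
\end{itemize}
These are exactly the two components of $\bm\eta$ and $\overline{\bm\eta}$ in \eqref{eq:eta} and \eqref{eq:etabar}. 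The residue pairs are counted by Lemma~\ref{lem:conteo_excluyendo_a_general}: the number of pairs $s\le t$ with $s+t=r$ avoiding $f$ is $\kappa(r,f)$. Lemma~\ref{lem:conteo_p_r} gives the analogous count $\bar\kappa(r,f,p)$ for $s+t=p+r$. Splitting each count into its strict part $s<t$ and its diagonal part $s=t$ produces the vectors $\bm\nu$ and $\bar{\bm\nu}$, and the two dot products sum to \eqref{eq:Q}. Finally, we check the boundary cases: when $\omega=0$, $\overline{\bm\eta}=(0,0)$, which is consistent with no pairs having $q_1+q_2=-1$.

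The main obstacle is the diagonal bookkeeping. We must verify that the second components of $\bm\nu$ and $\bar{\bm\nu}$ equal $1$ exactly when an admissible diagonal residue pair $s=t\ne f$ exists. In the first case this is $s=r/2$ with $r$ even. In the second case it is $s=(p+r)/2$, which requires $p+r$ even, i.e.\ $p-r$ even. We must also check that the strict count is $\kappa-\tau$, and similarly for $\bar\kappa$. Here one has to compare the definition $\tau(r,a)=(1-\delta_2(r))\mathbf 1(2a-r)$, and its shifted version $\tau(p-r,a-r)$, against the true diagonal condition, and check carefully whether the excluded residue $f$ can sit on the diagonal. If it can, the flag must be interpreted accordingly, and the identity in \eqref{eq:Q} must be confirmed in that subcase. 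I would do this subcase check, $r$ even or odd crossed with $f=r/2$ or not, explicitly before assembling the final sum.
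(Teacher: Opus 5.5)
\textbf{The deferred step fails.} Your setup follows the paper's proof almost line for line: the reduction to $z_1+z_2=m(n)$ with $\delta_p(z_i)\neq f$, the split $s+t\in\{r,p+r\}$, and the lift counts $\omega+1,\ \omega$ for $s<t$ and $\lfloor\omega/2\rfloor+1,\ \lfloor(\omega-1)/2\rfloor+1$ for $s=t$. Reading $\omega=\lfloor m(n)/p\rfloor$ is also the intended meaning. But the diagonal check you postpone is not routine bookkeeping. It fails, and the paper's proof skips it by simply asserting that $\bm{\nu},\overline{\bm{\nu}}$ encode the strict/diagonal split. By definition, $\tau(r,f)=(1-\delta_2(r))\mathbf{1}(2f-r)$ equals $1$ exactly when $r=2f$, i.e.\ when the diagonal residue pair $(r/2,r/2)$ is the \emph{excluded} pair $(f,f)$. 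When $r$ is even and $r\neq 2f$, the genuinely admissible diagonal pair gets flag $0$ and is lifted as if it were strict. Likewise, $\tau(p-r,f-r)=1$ if and only if $2f=p+r$. So for even sums the flag is the complement of the one your count requires. Comparing the dot products, the first term is off by $\pm\lceil\omega/2\rceil$ when $r$ is even, and the second by $\pm\lceil(\omega-1)/2\rceil$ when $r$ is odd.

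\textbf{Counterexamples.} For $p=5$ one has $a(5)=4$. Take $2n=32$, so $m=5$, $\omega=1$, $r=0$. The true count is $2$, namely $(1,31),(13,19)$. But $\bm{\nu}(0,4)=(1,0)$ and $\overline{\bm{\nu}}(0,4,5)=(1,0)$, so the formula gives $(2,1)\cdot(1,0)+(1,1)\cdot(1,0)=3$. For $p=7$ one has $a(7)=1$. Take $2n=56$, so $m=9$, $\omega=1$, $r=2=2f$. The true count is $4$, namely $(1,55),(13,43),(19,37),(25,31)$. But $\bm{\nu}(2,1)=(0,1)$ and $\overline{\bm{\nu}}(2,1,7)=(2,0)$, so the formula gives $(2,1)\cdot(0,1)+(1,1)\cdot(2,0)=3$.

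\textbf{Repair.} The statement cannot be proved as written; your planned subcase check would produce a counterexample. The fix is to redefine $\tau(r,a):=(1-\delta_2(r))\bigl(1-\mathbf{1}(2a-r)\bigr)$. Then $\tau(p-r,a-r)=1$ if and only if $(p+r)/2$ is an integer different from $a$; in that case $p-r\ge 2$, so $(p+r)/2\in\{r+1,\dots,p-1\}$. With this definition, $\kappa-\tau$ and $\bar\kappa-\tau$ are exactly the strict counts, and your argument proves the corrected identity verbatim.
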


\begin{proof}
We treat $\delta_3(n)=1$; the case $\delta_3(n)=2$ is analogous
with $a(p)$ replaced by $b(p)$.

\textit{Step 1: Reduction to a sum of residues.}
Every admissible decomposition with $\gcd(h,6)=\gcd(k,6)=1$
and $\delta_3(n)=1$ has the form
$(6z_1+1)+(6z_2+1)=2n$ with $z_1,z_2 \in \mathbb{Z}_{\ge 0}$.
This gives $z_1+z_2=(n-1)/3=m(n)$,
where the last equality uses $h(1)=1$ and Lemma~\ref{lem:m_integer}.

\textit{Step 2: Exclusion of multiples of $p$.}
By Lemma~\ref{thm:sol_min6_funcional_1},
$p \mid (6z_i+1) \iff \delta_p(z_i)=a(p)$.
Hence admissible values satisfy
$\delta_p(z_i) \in \{0,\dots,p-1\}\setminus\{a(p)\}$.
Write $z_i=px_i+s_i$ with $x_i \in \mathbb{Z}_{\ge 0}$ and
$s_i \in \{0,\dots,p-1\}\setminus\{a(p)\}$.
Substituting into $z_1+z_2=m(n)$ gives
\[
p(x_1+x_2)+s_1+s_2=m(n).
\]

\textit{Step 3: Residue constraints.}
Set $r:=\delta_p(m(n))$. Since $s_1,s_2 \in \{0,\dots,p-1\}$,
we have $s_1+s_2 \in \{0,\dots,2p-2\}$.
The only values in this range congruent to $r$ modulo $p$ are
$r$ and $p+r$, so exactly one of
\[
s_1+s_2=r
\qquad\text{or}\qquad
s_1+s_2=p+r
\]
holds for each admissible pair.

\textit{Step 4: Counting lifts.}
If $s_1+s_2=r$, then $x_1+x_2=\omega(n,p)$,
giving $\omega(n,p)+1$ pairs $(x_1,x_2) \in \mathbb{Z}_{\ge 0}^2$.
If $s_1+s_2=p+r$, then $x_1+x_2=\omega(n,p)-1$,
giving $\omega(n,p)$ pairs.

\textit{Step 5: Ordering condition.}
Since $6z_1+1 \le 6z_2+1 \iff z_1 \le z_2$, the condition
$h \le k$ translates directly to $z_1 \le z_2$.
For a symmetric residue pair $s_1=s_2$, the ordering requires
$x_1 \le x_2$, reducing the lift count to
$\lfloor\omega(n,p)/2\rfloor+1$ or
$\lfloor(\omega(n,p)-1)/2\rfloor+1$ respectively.
The vectors $\bm{\nu}$ and $\overline{\bm{\nu}}$ encode the
admissible residue pairs split into strictly ordered and diagonal
contributions; $\bm{\eta}$ and $\overline{\bm{\eta}}$ encode
the corresponding lift counts.
Combining via the scalar product gives~\eqref{eq:Q}.
\end{proof}

\begin{lemma}\label{lema:conteo_soluciones_0}
Let $2n$ be a positive even integer with $\delta_3(n)=0$,
and let $p \ge 5$ be prime.
The number of pairs $(h,k) \in \mathbb{Z}_{>0}^2$ with
$h+k=2n$, $h \le k$, $\gcd(h,6p)=\gcd(k,6p)=1$ is
\begin{equation}\label{eq:Q0}
Q_0(n,p)
=
\bigl(\omega(n,p)+1\bigr)\,
\lambda\!\bigl(\delta_p(m(n)),\,a(p),\,b(p)\bigr)
+
\omega(n,p)\,
\bar{\lambda}\!\bigl(\delta_p(m(n)),\,a(p),\,b(p),\,p\bigr).
\end{equation}
\end{lemma}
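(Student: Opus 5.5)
I will follow the same scheme as the proof of Lemma~\ref{lema:conteo_soluciones}. The new feature is that when $\delta_3(n)=0$ the two summands lie in different classes $U$ and $V$, so the ordering condition disappears and two different excluded residues appear.

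\textit{Step 1: Reduction.} Since $\delta_3(n)=0$, every admissible decomposition has one summand of the form $6z_1+1\in U$ and the other of the form $6z_2+5\in V$. Then $6(z_1+z_2)+6=2n$, so $z_1+z_2=(n-3)/3=m(n)$, using $h(0)=3$ and Lemma~\ref{lem:m_integer}. The unordered pair $\{h,k\}$ with $h\le k$ corresponds bijectively to the ordered pair $(6z_1+1,\,6z_2+5)$, because the two summands lie in different classes and so can never be equal or be swapped into one another. Hence the condition $h\le k$ imposes no restriction, and we must count all ordered pairs $(z_1,z_2)\in\mathbb{Z}_{\ge0}^2$ with $z_1+z_2=m(n)$ subject to the $p$-conditions. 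This is the key difference from Lemma~\ref{lema:conteo_soluciones}, and it explains why $\lambda$ and $\bar\lambda$ appear in place of the ordered vectors $\bm\nu$ and $\bar{\bm\nu}$.

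\textit{Step 2: Exclusions.} Lemma~\ref{thm:sol_min6_funcional_1} together with the uniqueness in Lemma~\ref{lema:unicidad_ep} gives $p\mid 6z_1+1\iff\delta_p(z_1)=a(p)$. Likewise, Lemma~\ref{thm:sol_min6_funcional_5} gives $p\mid 6z_2+5\iff\delta_p(z_2)=b(p)$. Write $z_i=px_i+s_i$ with $s_1\ne a(p)$ and $s_2\ne b(p)$. As in Step~3 of the previous lemma, set $r=\delta_p(m(n))$. Then $s_1+s_2\in\{r,\,p+r\}$ exactly, with $x_1+x_2=\omega(n,p)$ in the first case and $x_1+x_2=\omega(n,p)-1$ in the second. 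Here $\omega(n,p)=\lfloor m(n)/p\rfloor$.

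\textit{Step 3: Counting.} The residue pairs with $s_1+s_2=r$ avoiding $(a(p),b(p))$ are counted by Lemma~\ref{lem:lambda}, and those with $s_1+s_2=p+r$ by Lemma~\ref{lem:lambda_p}. The map $(x_1,x_2,s_1,s_2)\mapsto(z_1,z_2)$ is a bijection onto admissible pairs, since the $s_i$ are determined as $\delta_p(z_i)$. So for each residue pair the number of lifts is the number of nonnegative solutions of $x_1+x_2=\omega$ or $\omega-1$, namely $\omega+1$ or $\omega$. Multiplying and summing gives~\eqref{eq:Q0}.

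The main obstacle is the boundary behaviour. When $\omega(n,p)=0$ the second term must vanish, and it does, since its lift count is $\omega=0$. When $m(n)<0$ (for instance $n=0$, which is excluded by positivity) the count must be handled separately, but for positive $n$ with $3\mid n$ we have $m(n)\ge0$. Positivity of $h$ and $k$ is automatic, since $z_i\ge0$.
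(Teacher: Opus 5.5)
Your proof is correct and follows the same route as the paper: you reduce to $z_1+z_2=m(n)$ with summands $6z_1+1\in U$ and $6z_2+5\in V$, exclude the residues $a(p)$ and $b(p)$, split according to $s_1+s_2\in\{r,p+r\}$, and multiply the residue-pair counts from Lemmas~\ref{lem:lambda} and~\ref{lem:lambda_p} by the $\omega+1$ or $\omega$ lifts. Your reason for dropping the condition $h\le k$ is that each admissible pair corresponds to exactly one ordered pair $(u,v)\in U\times V$, so every $(z_1,z_2)$ is counted; this is cleaner than the paper's remark that $h\le k\iff z_1\le z_2$, which read literally would wrongly suggest restricting to $z_1\le z_2$ (for example $36=17+19$ with $p=5$ is admissible and has $z_1=3>z_2=2$).
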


\begin{proof}
Since $\delta_3(n)=0$, every admissible decomposition has the form
$(6z_1+1)+(6z_2+5)=2n$ with $z_1,z_2 \in \mathbb{Z}_{\ge 0}$,
giving $z_1+z_2=(n-3)/3=m(n)$ using $h(0)=3$.

By Lemmas~\ref{thm:sol_min6_funcional_1}
and~\ref{thm:sol_min6_funcional_5},
$p \nmid (6z_1+1) \iff \delta_p(z_1) \ne a(p)$ and
$p \nmid (6z_2+5) \iff \delta_p(z_2) \ne b(p)$.
Write $z_i=px_i+s_i$ with
$s_1 \in \{0,\dots,p-1\}\setminus\{a(p)\}$ and
$s_2 \in \{0,\dots,p-1\}\setminus\{b(p)\}$.
Substituting gives $p(x_1+x_2)+s_1+s_2=m(n)$.

Setting $r:=\delta_p(m(n))$, the pair $(s_1,s_2)$ satisfies
$s_1+s_2=r$ or $s_1+s_2=p+r$ by the same argument as above.

For $s_1+s_2=r$: Lemma~\ref{lem:lambda} gives
$\lambda(r,a(p),b(p))$ admissible residue pairs, each lifting to
$\omega(n,p)+1$ pairs $(x_1,x_2) \in \mathbb{Z}_{\ge 0}^2$.

For $s_1+s_2=p+r$: Lemma~\ref{lem:lambda_p} gives
$\bar{\lambda}(r,a(p),b(p),p)$ admissible residue pairs, each
lifting to $\omega(n,p)$ pairs.

Since $h=6z_1+1 < 6z_2+5=k$ whenever $z_1=z_2$
(as $1 < 5$), and $h \le k \iff z_1 \le z_2$ otherwise,
no diagonal correction is needed.
Multiplying and summing gives~\eqref{eq:Q0}.
\end{proof}

\begin{theorem}\label{thm:main}
Let $p \ge 5$ be prime.
For every positive even integer $2n$,
\[
g(2n,p)
=
\begin{cases}
Q(n,p)   & \text{if } \delta_3(n)\in\{1,2\},\\
Q_0(n,p) & \text{if } \delta_3(n)=0,
\end{cases}
\]
where $Q(n,p)$ and $Q_0(n,p)$ are given by
\eqref{eq:Q} and~\eqref{eq:Q0}.
\end{theorem}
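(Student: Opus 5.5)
The theorem is essentially an assembly of Lemmas~\ref{lema:conteo_soluciones} and~\ref{lema:conteo_soluciones_0}, so the plan is to reduce to them by a case split on $\delta_3(n)$. First observe that any pair $(h,k)$ counted by $g(2n,p)$ has $\gcd(h,6)=\gcd(k,6)=1$. Hence $h,k \in U\cup V$ as in~\eqref{eq:conjuntoUV}. The residue discussion preceding the selector $h(\cdot)$ shows that the type of decomposition ($U+U$, $V+V$ or $U+V$) is forced by $\delta_3(n)$. The three cases $\delta_3(n)\in\{0,1,2\}$ therefore partition all even $2n$, and in each case the set counted by $g(2n,p)$ is exactly the set counted in the corresponding lemma. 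Thus $g(2n,p)=Q(n,p)$ for $\delta_3(n)\in\{1,2\}$ and $g(2n,p)=Q_0(n,p)$ for $\delta_3(n)=0$.

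Because the lemma proofs are sketchy in places, I would also re-verify their key step: that the product structure (residue pairs)$\times$(lifts) is a bijection. Fix $\delta_3(n)=1$. The map $(h,k)\mapsto(z_1,z_2)\mapsto\bigl((s_1,s_2),(x_1,x_2)\bigr)$ is injective, with $z_i=\delta_p(z_i)+p\lfloor z_i/p\rfloor$. Its image is the set of pairs with $s_i\neq a(p)$ (Lemma~\ref{thm:sol_min6_funcional_1}), $s_1+s_2\in\{r,p+r\}$, and $x_1+x_2$ equal to $\omega$ or $\omega-1$ respectively.

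The ordering $z_1\le z_2$ needs care. If $s_1\ne s_2$, the pair $(s_1,s_2)$ and its swap correspond to the same unordered $\{z_1,z_2\}$, so it suffices to count residue pairs with $s_1<s_2$ and take all lifts. If $s_1=s_2$, we keep the lifts with $x_1\le x_2$. The first components of $\bm\nu,\bar{\bm\nu}$ (Lemmas~\ref{lem:conteo_excluyendo_a_general},~\ref{lem:conteo_p_r}, minus the diagonal $\tau$) count residue pairs with $s_1<s_2$. The second components flag the diagonal. Multiplying by $\bm\eta,\bar{\bm\eta}$ then gives~\eqref{eq:Q}.

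One subtlety should be checked here: for $s_1<s_2$ we must ensure $z_1\le z_2$ for every lift. This is false when $x_1>x_2$. The correct statement is therefore that unordered pairs $\{z_1,z_2\}$ with $z_1\neq z_2$ and distinct residues correspond bijectively to ordered residue pairs with $s_1<s_2$ together with arbitrary lifts. I would argue this correspondence by swapping. The case $\delta_3(n)=2$ is identical with $b(p)$ (Lemma~\ref{thm:sol_min6_funcional_5}).

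For $\delta_3(n)=0$, no symmetry is needed, since $h\in U$, $k\in V$ is forced once we assign $h=6z_1+1$. However, $h\le k$ is not automatic. The pairs are unordered as sets $\{h,k\}$, and exactly one element lies in $U$. So I would count ordered pairs $(6z_1+1,6z_2+5)$ without an ordering constraint, each unordered pair appearing exactly once. This then matches~\eqref{eq:Q0} via Lemmas~\ref{lem:lambda},~\ref{lem:lambda_p}.

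I expect the main obstacle to be the boundary cases. These are small $n$ where $m(n)<0$ or $\omega(n,p)=0$, such as $2n=2$ or $2n=6$, where the lift counts $\omega$ or $\lfloor(\omega-1)/2\rfloor+1$ could go negative. I would handle them by checking directly that the formulas give $0$ (or the correct small count), or by restricting to $m(n)\ge0$ and treating the remaining finitely many $n$ separately.
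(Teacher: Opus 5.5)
The gap is in the one step you accept without checking: that the second components of $\bm{\nu}$ and $\bar{\bm{\nu}}$ flag the presence of the diagonal residue pair. The indicator is inverted, so \eqref{eq:Q} and the statement as written are false, and no proof can close that step.

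Your reduction is exactly the paper's proof: split on $\delta_3(n)$ and invoke Lemmas~\ref{lema:conteo_soluciones} and~\ref{lema:conteo_soluciones_0}. Two of your refinements are sound and better than the paper's text:
\begin{itemize}
\item The swap argument is the right justification for giving residue pairs with $s_1\neq s_2$ all their lifts.
\item For $\delta_3(n)=0$, the fact that each $\{h,k\}$ has exactly one element in $U$ is what actually justifies counting all $(z_1,z_2)$ in~\eqref{eq:Q0}.
\end{itemize}

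\textbf{Why $\tau$ is inverted.} By definition $\tau(r,a)=(1-\delta_2(r))\,\mathbf{1}(2a-r)$ equals $1$ exactly when $r=2a$. That is when the diagonal pair $(r/2,r/2)=(a,a)$ is the \emph{excluded} one. Likewise $\tau(p-r,a-r)=1$ iff $a=(p+r)/2$. Two errors follow:
\begin{itemize}
\item An admissible diagonal pair lands in the first component. It then gets $\omega+1$ (resp.\ $\omega$) lifts instead of $\lfloor\omega/2\rfloor+1$ (resp.\ $\lfloor(\omega-1)/2\rfloor+1$).
\item When the diagonal is excluded, one pair is moved spuriously into the second component. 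The first component can even be negative, e.g.\ $\bm{\nu}(0,0)=(-1,1)$.
\end{itemize}

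\textbf{Counterexample.} Take $p=5$ and $2n=32$, reading $\bm{\eta}(m(n),p)$ as $\bm{\eta}(n,p)$ as in the lemma's proof. Here $a(5)=4$, $\delta_3(16)=1$, $m=5$, $\omega=1$, $r=0$.
\begin{itemize}
\item The true count is $2$. The conditions $z_1+z_2=5$, $z_1\le z_2$, $z_i\not\equiv 4\pmod 5$ leave $(0,5),(2,3)$, i.e.\ $(h,k)=(1,31),(13,19)$.
\item The formula gives $\bm{\nu}(0,4)=(1,0)$, $\bm{\eta}=(2,1)$, $\bar{\bm{\nu}}(0,4,5)=(1,0)$, $\overline{\bm{\eta}}=(1,1)$. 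Hence $Q=2+1=3$.
\end{itemize}
Similarly $g(40,5)=2$ while $Q(20,5)=1$.

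\textbf{What your argument does prove.} Your bijection argument proves the corrected formula, with $\tau$ replaced by $(1-\delta_2(r))\bigl(1-\mathbf{1}(2a-r)\bigr)$ in $\bm{\nu}$ and by $(1-\delta_2(p-r))\bigl(1-\mathbf{1}(2a-p-r)\bigr)$ in $\bar{\bm{\nu}}$.

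\textbf{The boundary cases are harmless.} The cases you singled out as the main obstacle do not cause trouble. $m(n)<0$ happens only for $n=2$, where $\bm{\eta}=(0,0)$ and $\bar{\bm{\nu}}=(0,0)$ give $Q=0=g(4,p)$. Whenever $\omega=0$, the value of $\tau$ does not affect the result.
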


\begin{proof}
The cases $\delta_3(n)\in\{0,1,2\}$ are exhaustive and mutually
exclusive. For $\delta_3(n)\in\{1,2\}$, the result is
Lemma~\ref{lema:conteo_soluciones}. For $\delta_3(n)=0$, it is
Lemma~\ref{lema:conteo_soluciones_0}. Both lemmas apply to all
positive even integers $2n$ and primes $p \ge 5$.
\end{proof}

\section{Computational Validation}
\label{sec:validation}

To validate Theorem~\ref{thm:main}, we implemented two independent
procedures and compared them pointwise over all even integers
$2n \le 10^5$ and primes $p \in \{5,7,11,13,17,19,23\}$,
giving a total of $350{,}000$ test cases.
Perfect agreement was observed in all cases.
The implementation uses equivalent arithmetic expressions that
produce identical results to the formulas in
Theorem~\ref{thm:main}.

\subsection{Closed-form algorithm}

For fixed $p$, the parameters $a(p)$ and $b(p)$ are precomputed
once in $O(\log p)$ operations via Theorem~\ref{thm:min_sol_afrc}.
Thereafter, given any even integer $2n \ge 2$, the formula in
Theorem~\ref{thm:main} evaluates $g(2n,p)$ using only integer
arithmetic and a constant number of elementary operations,
giving complexity $O(1)$ per evaluation.

\begin{algorithm}[H]
\caption{Closed-form count of $g(2n,p)$}
\begin{algorithmic}[1]
\Require $N \ge 2$ even, prime $p \ge 5$
\State $n \gets N/2$
\If{$\delta_3(n) = 0$}
    \State $m \gets (n-3)/3$
    \State \Return $Q_0(n,p)$
\Else
    \State $m \gets (n - h(\delta_3(n)))/3$
    \State \Return $Q(n,p)$
\EndIf
\end{algorithmic}
\end{algorithm}

\subsection{Brute-force oracle}

The oracle iterates over $1 \le h \le n$, sets $k = N-h$,
and checks $\gcd(h,6p)=\gcd(k,6p)=1$ directly.
Time complexity: $O(n)$.

\begin{algorithm}[H]
\caption{Brute-force oracle}
\begin{algorithmic}[1]
\Require $N \ge 2$ even, prime $p \ge 5$
\State $n \gets N/2$
\State $\mathrm{total} \gets 0$
\For{$h = 1$ \textbf{to} $n$}
    \State $k \gets N - h$
    \If{$\gcd(h,6p)=1$ \textbf{and} $\gcd(k,6p)=1$}
        \State $\mathrm{total} \gets \mathrm{total} + 1$
    \EndIf
\EndFor
\State \Return $\mathrm{total}$
\end{algorithmic}
\end{algorithm}

\subsection{Coprime decomposition diagrams}

Figure~\ref{fig:diagrama} shows the graphs $(2n,g(2n,p))$ for
$p \in \{5,7,11\}$ and $2n \le 10^5$.
The piecewise affine structure governed by residue classes
modulo $3$ and $p$ is clearly visible; the parallel affine
families correspond to the residue classes of $n$ modulo $3p$,
consistent with Theorem~\ref{thm:main}.

\begin{figure}[h]
\centering
\includegraphics[width=0.85\linewidth]{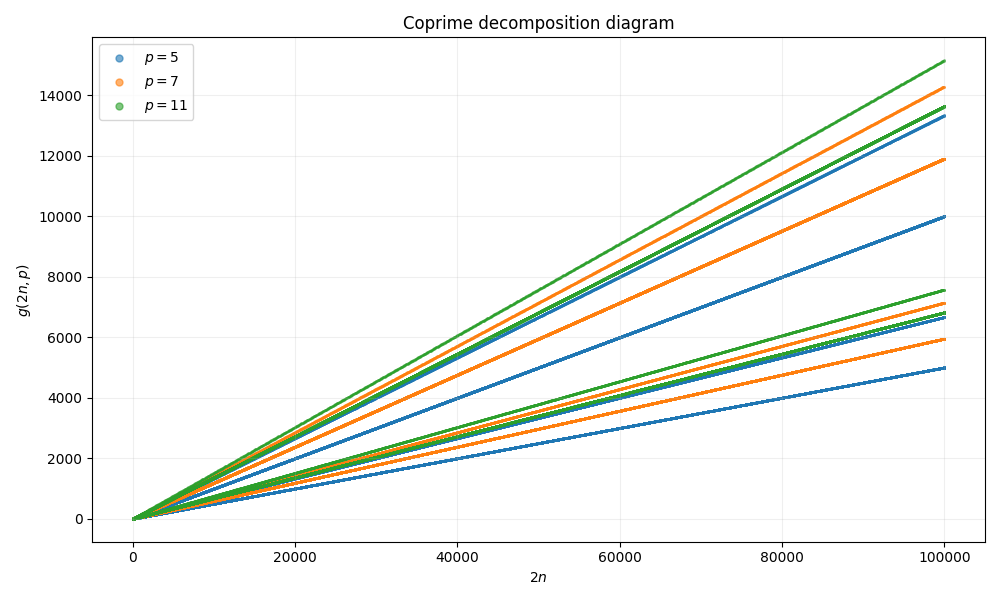}
\caption{Graphs of $g(2n,p)$ for $p\in\{5,7,11\}$ and
         $2n\le 10^5$.}
\label{fig:diagrama}
\end{figure}

\paragraph{Code availability.}
Source code for both procedures is publicly available at
\url{https://github.com/radix-place/coprime_count}.

\section{Conclusions}
\label{sec:conclusions}

We derived explicit closed-form formulas for the counting function
\[
g(2n,p)
=
\#\bigl\{(h,k)\in\mathbb{Z}_{>0}^2 :
h+k=2n,\; h \le k,\;
\gcd(h,6p)=\gcd(k,6p)=1\bigr\},
\]
expressed in terms of elementary remainder operations and step
functions. For fixed $p$, after precomputing the parameters
$a(p)$ and $b(p)$ in $O(\log p)$ operations via the Euclidean
algorithm, each evaluation of $g(2n,p)$ requires $O(1)$
operations, in contrast to the $O(n)$ brute-force approach.
The formulas were validated against a brute-force oracle for
all $2n \le 10^5$ and $p \in \{5,7,11,13,17,19,23\}$,
with perfect agreement in all cases.

The formulas make explicit a piecewise affine structure of
$g(2n,p)$ along arithmetic progressions of $n$, governed by
residue classes modulo $3$ and $p$.
This structure is directly encoded in the minimal solutions
$a(p)$ and $b(p)$ of the congruences
$6x \equiv -1 \pmod{p}$ and $6x \equiv -5 \pmod{p}$,
which act as the key arithmetic parameters of the problem.

The approach extends naturally to exclusion sets of the form
$\{2,3,p_1,\dots,p_k\}$ for any finite collection of primes,
at the cost of additional case analysis governed by residues
modulo $6p_1\cdots p_k$.
We leave this generalization for future work.

% --- Bibliography ---
\bibliographystyle{plainnat}
\bibliography{references}

@book{HardyWright2008,
  author    = {G. H. Hardy and E. M. Wright},
  title     = {An Introduction to the Theory of Numbers},
  edition   = {6},
  publisher = {Oxford University Press},
  year      = {2008}
}

@book{BrentZimmermann2010,
  author    = {R. P. Brent and P. Zimmermann},
  title     = {Modern Computer Arithmetic},
  publisher = {Cambridge University Press},
  year      = {2010}
}

@book{Nathanson2000,
  author    = {Melvyn B. Nathanson},
  title     = {Additive Number Theory: The Classical Bases},
  publisher = {Springer},
  year      = {2000}
}

@book{halberstam-richert,
  author = {Halberstam, H. and Richert, H.-E.},
  title = {Sieve Methods},
  publisher = {Academic Press},
  year = {1974}
}

@book{iwaniec-kowalski,
  author = {Iwaniec, Henryk and Kowalski, Emmanuel},
  title = {Analytic Number Theory},
  publisher = {American Mathematical Society},
  year = {2004}
}

\end{document}